\documentclass[preprint,review,10pt]{elsarticle}
\usepackage{bbm}
\usepackage{amsmath,amssymb,amsthm}
\usepackage{color}
\usepackage{xcolor}
\usepackage{graphicx}
\usepackage{enumerate}
\usepackage{ifpdf}
\usepackage{algorithmic}
\usepackage{float}
\usepackage{CJK}
\usepackage{listings}
\usepackage{bbm}
\usepackage{mathrsfs}
\usepackage{amsmath}
\usepackage{txfonts}
\usepackage{epstopdf}
\usepackage{cases}
\usepackage{subfigure}
\usepackage{enumerate}
\usepackage{lipsum}
\usepackage{ulem}
\usepackage{multirow}
\makeatletter
\def\ps@pprintTitle{%
 \let\@oddhead\@empty
 \let\@evenhead\@empty
 \def\@oddfoot{}%
 \let\@evenfoot\@oddfoot}
\makeatother
 \usepackage{graphicx}

\usepackage{amssymb}
\usepackage{amsthm}

\textheight=7.5in
\topmargin=0.1in
\textwidth=6.55in
\oddsidemargin=0.28in
\baselineskip=15pt
\setlength{\parskip}{0pt}

\makeatletter\@addtoreset{equation}{section} \makeatother
\setlength{\unitlength}{1cm}
\newtheorem{theorem}{Theorem}[section]
\newtheorem{lemma}{Lemma}[section]

\newtheorem{remark}{Remark}[section]
\newtheorem{definition}{Definition}[section]

\usepackage{geometry}
\geometry{left=3.2cm,right=3.2cm,top=2.5cm,bottom=2.5cm}

\begin{document}
\begin{frontmatter}
\title{Quasi-interpolation for high-dimensional function approximation\tnoteref{label1}}
\tnotetext[label1]{This work was supported by NSFC (No.12271002, 12101310), Yong Outstanding Talents of Universities of Anhui Province (No.2024AH020019), NSF of Jiangsu Province ( No.BK20210315), the Fundamental Research Funds for the Central Universities (No.30923010912), and the Doctoral Research Startup Project of University of South China (No.5524QD015).}
\author{Wenwu Gao\fnref{label2}}
 \ead{wenwugao528@163.com}
 \author{Jiecheng Wang\fnref{label3}}
 \ead{jxwjc05@163.com}
\author{Zhengjie Sun*\fnref{label4}}
\ead{sunzhengjie1218@163.com}
 \author{Gregory E.~Fasshauer\fnref{label5}}
 \ead{fasshauer@mines.edu}
 \address[label2]{School of Big Data and Statistics, Anhui University, Hefei, China}
 \address[label3]{School of Economic, Management and Law, University of South China, Hengyang, China.}
 \address[label4]{School of Mathematics and Statistics, Nanjing University of Science and Technology, Nanjing, China}
 \address[label5]{Department of Applied Mathematics and Statistics, Colorado School of Mines, Golden, CO 80401, USA}
 \cortext[cor1]{Corresponding author}
\begin{abstract} The paper proposes a   general quasi-interpolation scheme  for  high-dimensional function approximation. To facilitate error analysis, we view  our quasi-interpolation as a two-step procedure. In the first step, we approximate a target function by a purpose-built convolution operator (with an error term referred to as convolution error). In the second step, we discretize the underlying convolution operator using certain quadrature rule  at the given sampling data sites (with an error term called discretization error).  The final approximation error is obtained as an optimally balanced sum of these two errors, which in turn views  our quasi-interpolation as a regularization technique that balances  convolution error and discretization error.  As a concrete  example, we construct a sparse grid quasi-interpolation scheme for high-dimensional function approximation. Both theoretical analysis and numerical implementations provide evidence that our quasi-interpolation scheme is robust and is capable of mitigating the curse of dimensionality   for approximating high-dimensional functions.
\end{abstract}
\begin{keyword}
 High-dimensional function approximation,  Quasi-interpolation, Multiquadric trigonometric kernel, Curse of dimensionality, Sparse grid.\\
AMS Subject Classifications:  41A05, 41A25, 41A30, 41A63, 42B05, 65D15, 65D40.
\end{keyword}
\end{frontmatter}

\section{\textit{Introduction}}
Function approximation is a fundamental tool for understanding and harnessing data both from the industry and research communities. Note that there are many practical function approximation problems involving high dimensionality, for example,  high-dimensional integration stemming from physics and finance \cite{Lemieux}, stochastic differential equations \cite{Sloan4}, uncertainty quantification (UQ) \cite{Hennig}, reinforcement learning \cite{Montanelli}, and  some data mining problems \cite{Vipin}.  But classical function approximation tools  usually suffer from the curse of dimensionality for  high-dimensional function approximation problems \cite{Bellmann}. This is mainly due to the observation that the cost of representation and  computation of an approximation with a prescribed accuracy grows exponentially with the dimension of the  problem under consideration.  Therefore, it is meaningful and urgent to propose efficient approximation techniques (either from representation or computation consideration) for high-dimensional function such that we can break/mitigate the curse of dimensionality. Under such a driving force, there have been a lot of influential  tools including the superposition method  \cite{Khavinson}, the sparse grid approach \cite{BungartzandGriebel},  the sparsity-based approximations \cite{Adcock}, and many others \cite{Griebel}.

The superposition method represents a high-dimensional function as a superposition of several continuous functions with fewer variables \cite{Khavinson}. Moreover, it is closely related to the well-known Hilbert's $13$th problem \cite{Vitushkin}. 
Particularly, the   Kronecker-product  type approximation \cite{Beylkin} takes $m$ linear combinations of tensor-product univariate functions: \begin{equation}\label{Kronecker}
  Kf(x_1,x_2,\cdots,x_d)=\sum_{j=1}^mc_j\prod_{k=1}^df_{j,k}(x_k), \ c_j\in\mathbb{R},
  \end{equation} which makes it  more efficient and suitable for approximating high-dimensional anisotropic functions. However, it  requires solving some kind of minimization problem to get the final approximants \cite{Griebel}. This not only leads to a large amount of computation time but also non-unique representations (due to the fact that objective functionals may not be globally convex). To circumvent these limitations,  we shall construct an alternative scheme under the framework of quasi-interpolation, a basic tool for function approximation (e.g., \cite{BackusandGilbert}, \cite{BeatsonLight},  \cite{Buhmann, Buhmann1, BuhmannandDai}, \cite{BuhmannandJag}, \cite{Dung0},  \cite{Dung}, \cite{DungandThao}, \cite{GaoandWu5}, \cite{GaoandWu6}, \cite{GaoandGreg}, \cite{Grohs}, \cite{Speleers}, \cite{Usta}, \cite{Vainikko}).

 Let  $f|_{\bold X}$ be corresponding discrete function values  evaluated at the sampling centers $\bold X$ over a bounded domain $\Omega\subset\mathbb{R}^d$. Let $\Phi_h$ be a purposely constructed univariate kernel that converges  (in the distributional sense) to the Dirac delta distribution as $h$ tends to zero. Our quasi-interpolation takes a general (tensor-product) form
 \begin{equation}\label{generalform}
 Q_{\bold h}f(\bold x)=\sum_{\bold x_{\bold j}\in \bold X}f(\bold x_{\bold j})\mu_{\bold j}\prod_{k=1}^d\Phi_{h_k}(x_k-x_{j_k}),\ \ \bold h=(h_1,\cdots,h_d), \ \bold x_{\bold j}=(x_{j_1},\cdots,x_{j_d}),\  \bold j=(j_1,j_2,\cdots,j_d).
 \end{equation}
   Here  $\{\mu_{\bold j}\}$ are quadrature weights corresponding to sampling centers $\bold X$ such that $Q_{\bold h}f(\bold x)$ is a quadrature rule of the convolution operator $\int_{\Omega}f(\bold t)\prod_{k=1}^d\Phi_{h_k}(x_k-t_{k})d\bold t$ for a fixed point $\bold x\in \Omega$. The quasi-interpolation \eqref{generalform}  yields an approximant directly without the need to solve any minimization problem. In addition, it inherits fair properties (i.e., optimality, regularization, univariate structure, etc.) while circumventing limitations of the Kronecker-product  type approximation. 

The main contributions of the paper are two-fold.
Firstly, we propose a general quasi-interpolation framework for high-dimensional periodic function approximation.  We would like to stress that choices of tensor-product multiquadric (MQ) trigonometric kernel and sparse grid  lead to a concrete scheme with three salient properties. It is local and shape-preserving (Lemma $3.2$).  In addition, it includes well established trigonometric B-spline quasi-interpolation \cite{Stanley} as  a special case (with all shape parameters being zero).   Thus it  is more suitable for constrained approximation  requiring locality and shape preservation (such as CAD, CAM in geometric design). More importantly, it gives nice approximation results to both target function and high-order derivatives (Theorem $3.3$), which makes it more useful especially in cases that involve approximating high-order derivatives. Secondly, by coupling periodization technique \cite{NasdalaandPotts} together with quasi-interpolation, we construct a simple and efficient quasi-interpolation scheme for high-dimensional non-periodic function approximation over bounded domain   without the notorious boundary problem. We remind readers that periodization has been a well established and widely used technique for numerical integration (\cite{Kritzer}, \cite{Novak}) as well as non-periodic function approximation (\cite{NasdalaandPotts}, \cite{Peigney}).

The paper is structured as follows. In Section $2$, we introduce notations and definitions,  and review some important results of periodization strategies, multiquadric trigonometric function,  and  sparse grids.
Section $3$  elaborates the key idea  of  approximating high-dimensional function  through constructing a  sparse grid quasi-interpolation scheme  as a concrete example.  Subsection $3.1$ constructs a sparse grid quasi-interpolation scheme  for  periodic function approximation over a high-dimensional torus and derives its simultaneous  approximation orders. Subsection $3.2$ extends the quasi-interpolation scheme to the non-periodic case via the periodization technique. Section $4$ is devoted to an exposition of numerical simulation results. Conclusions and discussions are presented in Section $5$.
\label{Maintext}
\section{\textit{Preliminaries}}
\subsection{\textit{Notations and definitions}}
We focus on function approximations over two commonly considered domains \cite{NasdalaandPotts}: the torus $\mathbb{T}^d\simeq[0,1)^d$ and the unit  cube $[0,1]^d$. We say a function is defined on a torus $\mathbb{T}^d$ meaning that  the function is periodic with respect to each coordinate and the period $\bold T:=(1,1,\cdots,1)=\{1\}^d$.
Our target functions are from weighted Sobolev spaces of integer order \cite{Adams}. More precisely, let $ \Omega\in \{\mathbb{T}^d, \ [0,1]^d\}$ and let  $L_p(\Omega,\omega)$ denote the Banach spaces consisting of all measurable functions $f$ with respect to the positive weight $\omega$ on $\Omega$ for which $\|f\|_{L_p(\Omega,\omega)} < \infty$,  where $1 \le p \le \infty$. Here we define \begin{equation*}
\|f\|_{L_p(\Omega,\omega)} :=
\begin{cases}
{\displaystyle\left( \int_{\Omega} |f(x)|^p \omega(x)dx \right)^{1/p},}  & \text{if $1 \le p < \infty $,}
\\
\inf \{C: |f(x)\omega(x)| \le C \; \text{for almost all} \;  x \},  & \text{if $p=\infty$.}
\end{cases}
\end{equation*}
 Let $l$ be a positive integer  and $\mathbb{N}$  the set of natural number. Denote by $\mathbb{N}^d$ the $d$-fold tensor-product of $\mathbb N.$ Let $\boldsymbol{\alpha}=(\alpha_1,\cdots,\alpha_d)\in \mathbb{N}^d$ be a multi-index with $|\boldsymbol{\alpha}|=\sum^d_{j=1} \alpha_j$ and $\|\boldsymbol{\alpha}\|_{\infty}=\max\{\alpha_j,\ j=1,\cdots, d\}$. Denote by $D^{\boldsymbol{\alpha}}$ the differential operator of order $|\boldsymbol{\alpha}|$, that is, $$D^{\boldsymbol{\alpha}}f(\bold x)=\frac{\partial^{\alpha_1}}{\partial x_1^{\alpha_1}}\cdots\frac{\partial^{\alpha_d}}{\partial x_d^{\alpha_d}}f(x_1,\cdots,x_d).$$ 
For   $\Omega=[0,1]^d$,  we consider  weighted Sobolev spaces
$ W^{l}_p (\Omega,\omega)$  of functions satisfying
$$D^{\boldsymbol{\alpha}} f \in L_p(\Omega,\omega), \quad 0 \leq \|\boldsymbol{\alpha}\|_{\infty} \leq l,$$ whose weighted Sobolev norms are defined as 
\[
\|f\|_{W^{l}_p({\Omega},\omega)}=\Big(\sum_{0 \le \|\boldsymbol{\alpha}\|_{\infty} \le l} \|D^{\boldsymbol{\alpha}} f\|_{L_p({\Omega},\omega)}^p\Big)^{1/p}.
\]
 For $\Omega=\mathbb{T}^d$ and a constant weight  $\omega$,  we   follow the definition of  Sobolev spaces (Wiener spaces) introduced in Kolomoitsev et al., \cite{Kolomoitsev3}  in terms of the Fourier coefficients  $$\hat{f}(\bold k)=\int_{\mathbb{T}^d}f(\bold x)e^{-i(\bold k,\bold x)}d\bold x,$$
 where $(\bold k,\bold x)=\sum_{j=1}^dx_jk_j$.
 In particular, we adopt the definitions of $ W^l_2 (\mathbb{T}^d)$  (the Hilbert space $\mathbb{H}^l(\mathbb{T}^d)$) and $ W^l_{\infty} (\mathbb{T}^d)$ from \cite{Kolomoitsev3}:
   \begin{equation*}\label{H}
   \mathbb{H}^l(\mathbb{T}^d):=\Bigg\{f\in L_2(\mathbb{T}^d): \|f\|_{\mathbb{H}^l(\mathbb{T}^d)}=\Bigg(\sum_{\bold k\in \mathbb{Z}^d}(1+|\bold k|)^{2l}|\hat{f}(\bold k)|^2\Bigg)^{\frac{1}{2}}<\infty \Bigg\},
  \end{equation*}
 and 
  \begin{equation*}\label{H}
    \mathbb{W}_{\infty}^l(\mathbb{T}^d):=\Bigg\{f\in L_1(\mathbb{T}^d): \|f\|_{\infty}=\sup_{\bold k\in \mathbb{Z}^d}(1+|\bold k|)^{l}|\hat{f}(\bold k)|<\infty \Bigg\}.
 \end{equation*}
\subsection{\textit{Periodization}}
Periodization is essentially a special trick of changes of variables  \cite{Kritzer}. For more details of periodization, we refer readers to   \cite{Kuo}, \cite{Potts}, and the references therein. Given a \textbf{non-periodic} function $g\in W_p^l([0,1]^d,\omega)$, periodization means that we transform $g$ into a \textbf{periodic} function $f$ that can be continuously extendable on the torus $\mathbb{T}^d$ by a torus-to-cube transformation $\boldsymbol{\gamma}(\bold x)=(\gamma_1(x_1),\cdots,\gamma_d(x_d))$  such that
$$\int_{[0,1]^d}|g(\bold y)|^p\omega(\bold y)d\bold y=\int_{[0,1]^d}|g(\boldsymbol{\gamma}(\bold x))|^p\omega(\boldsymbol{\gamma}(\bold x))d\boldsymbol{\gamma}(\bold x) =\int_{\mathbb{T}^d}|f(\bold x)|^pd\bold x.$$ The transformation preserves the $L_p$-norm ($1\leq p<\infty$). In addition, the periodic function $f$ can be expressed by \cite{Kritzer}
\begin{equation}\label{periodicfunction}
f(\bold x)=g(\gamma_1(x_1),\cdots,\gamma_d(x_d))\prod_{j=1}^d\Bigg(\omega_j(\gamma_j(x_j))\gamma_j'(x_j)\Bigg)^{1/p},\ x_j\in \mathbb{T}.
 \end{equation}
 This implies that we respectively adopt the tensor-product forms of weight function $\boldsymbol{\omega}$ and $\boldsymbol{\gamma}$ to apply the  periodization strategy to each coordinate \cite{NasdalaandPotts}, namely,  $$\boldsymbol{\gamma}(\bold x)=(\gamma_1(x_1),\cdots,\gamma_d(x_d))=\prod_{j=1}^d\gamma_j(x_j),\quad \boldsymbol{\omega}(\boldsymbol{\gamma}(\bold x))=\prod_{j=1}^d\omega_j(\gamma_j(x_j)).$$ Under such a premise, we only need to give the definition of a torus-to-cube transformation for $d=1$. We introduce some results in Nasdala and Potts \cite{NasdalaandPotts}.
\begin{definition}\label{transform}
We call a mapping $\gamma_{j}:[0,1]\rightarrow [0,1]$ satisfying  $\lim_{x_{j}\rightarrow 0}\gamma_{j}(x_{j})=0$ and  $\lim_{x_{j}\rightarrow 1}\gamma_{j}(x_{j})=1$  a torus-to-cube transformation if it is continuously differentiable, increasing and has the first-order derivative $\gamma_{j}'\in C(\mathbb{T})$ (the collection of all continuous functions).
\end{definition}
   
Furthermore, to guarantee that the eventual periodic function $f$  is in  $\mathbb{H}^l(\mathbb{T}^d)$, both the transformation and weight function have to satisfy certain boundary conditions. In particular,   we recall the following sufficient   conditions (Theorem $4$ in Nasdala and Potts \cite{NasdalaandPotts}).
\begin{lemma}\label{lemmaH(T)}
Let a product transformation $\boldsymbol{\gamma}$, a product weight function $\boldsymbol{\omega}$,  and the corresponding transformed function $f$ be defined in Equation \eqref{periodicfunction}. Then for any function $g\in L_2([0,1]^d,\boldsymbol{\omega})\cap C_{\text{mix}}^l([0,1]^d)$, we have $f\in\mathbb{H}^l(\mathbb{T}^d)$ if the transformation satisfies
\begin{equation}\label{conditions}
\boldsymbol{\gamma}\in C_{\text{mix}}^l([0,1]^d)\quad\quad \text{and} \quad\quad D^{\boldsymbol{\alpha}}\Bigg[\prod_{j=1}^d\sqrt{\omega_j(\gamma_j)\gamma_j'}\Bigg]\in C_0([0,1]^d),
\end{equation}
for all multi-indices $\boldsymbol{\alpha}$ with $\|\boldsymbol{\alpha}\|_{\infty}\leq l$.
Here $C_{\text{mix}}^l([0,1]^d)$ denotes the collection of all functions with mixed continuous differentiability of order $l$ and $C_{0}([0,1]^d)$ denotes the set of all continuous functions vanishing at boundaries.
\end{lemma}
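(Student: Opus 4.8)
The plan is to argue in physical space, reducing membership in $\mathbb{H}^l(\mathbb{T}^d)$ to $L_2$-bounds on the weak partial derivatives of $f$, and then to compute those derivatives explicitly from the product structure of $f$, controlling every resulting factor by the two hypotheses in \eqref{conditions}. Taking $p=2$ in \eqref{periodicfunction}, I write $f(\bold x)=g(\boldsymbol{\gamma}(\bold x))\,\Psi(\bold x)$ with the weight factor $\Psi(\bold x):=\prod_{j=1}^{d}\sqrt{\omega_j(\gamma_j(x_j))\,\gamma_j'(x_j)}$. Starting from the Fourier definition of $\mathbb{H}^l(\mathbb{T}^d)$, the elementary bound $(1+|\bold k|)^{2l}\asymp\sum_{|\boldsymbol{\alpha}|\le l}\bold k^{2\boldsymbol{\alpha}}$ (a multinomial expansion of $(1+|\bold k|^2)^l$), together with Parseval's identity and the Fourier differentiation rule (which multiplies $\hat f(\bold k)$ by a constant multiple of $\bold k^{\boldsymbol{\alpha}}$), shows, up to fixed constants,
\[
\|f\|_{\mathbb{H}^l(\mathbb{T}^d)}^2\asymp\sum_{|\boldsymbol{\alpha}|\le l}\|D^{\boldsymbol{\alpha}}f\|_{L_2(\mathbb{T}^d)}^2 .
\]
Hence it suffices to show $D^{\boldsymbol{\alpha}}f\in L_2(\mathbb{T}^d)$ for all $|\boldsymbol{\alpha}|\le l$; since $|\boldsymbol{\alpha}|\le l$ implies $\|\boldsymbol{\alpha}\|_{\infty}\le l$, I may restrict to the range $\|\boldsymbol{\alpha}\|_{\infty}\le l$ appearing in \eqref{conditions}.

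Second, I would compute $D^{\boldsymbol{\alpha}}f$ classically on the open cube $(0,1)^d$. The Leibniz rule applied to $f=(g\circ\boldsymbol{\gamma})\,\Psi$ gives
\[
D^{\boldsymbol{\alpha}}f=\sum_{\boldsymbol{\beta}\le\boldsymbol{\alpha}}\binom{\boldsymbol{\alpha}}{\boldsymbol{\beta}}\,D^{\boldsymbol{\beta}}(g\circ\boldsymbol{\gamma})\,D^{\boldsymbol{\alpha}-\boldsymbol{\beta}}\Psi .
\]
Because $\boldsymbol{\gamma}$ is a diagonal product map, the chain rule decouples coordinatewise, so each $D^{\boldsymbol{\beta}}(g\circ\boldsymbol{\gamma})$ is a finite sum of products of mixed derivatives of $g$ (of mixed order $\le l$, evaluated at $\boldsymbol{\gamma}(\bold x)$) with derivatives of the $\gamma_j$ of order $\le l$, all continuous since $g,\boldsymbol{\gamma}\in C_{\text{mix}}^l([0,1]^d)$; the companion factor $D^{\boldsymbol{\alpha}-\boldsymbol{\beta}}\Psi$ is continuous by the second condition in \eqref{conditions} (note $\|\boldsymbol{\alpha}-\boldsymbol{\beta}\|_{\infty}\le l$). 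Thus each $D^{\boldsymbol{\alpha}}f$ is a finite sum of products of continuous functions, hence bounded on $[0,1]^d$, and since $\mathbb{T}^d$ has finite measure, $D^{\boldsymbol{\alpha}}f\in L_2(\mathbb{T}^d)$. Should $g$ and its derivatives be merely $L_2([0,1]^d,\boldsymbol{\omega})$-integrable rather than bounded near the boundary, the vanishing of $D^{\boldsymbol{\alpha}-\boldsymbol{\beta}}\Psi$ there absorbs the growth, and the base case $\boldsymbol{\alpha}=\boldsymbol{0}$ follows directly from the norm preservation $\|f\|_{L_2(\mathbb{T}^d)}=\|g\|_{L_2([0,1]^d,\boldsymbol{\omega})}$ recorded before the lemma, which is where that hypothesis enters.

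Third, and most importantly, I must verify that these classically computed derivatives are the genuine \emph{weak} derivatives on the torus, i.e. that the periodic extension of $f$ introduces no singular boundary contribution. Using the Leibniz expansion above and the fact that $D^{\boldsymbol{\alpha}-\boldsymbol{\beta}}\Psi\in C_0([0,1]^d)$ for every admissible index, every term of $D^{\boldsymbol{\alpha}}f$ with $\|\boldsymbol{\alpha}\|_{\infty}\le l$ vanishes on $\partial([0,1]^d)$, even though $g\circ\boldsymbol{\gamma}$ is itself generally \emph{not} periodic (the values $g(\dots,0,\dots)$ and $g(\dots,1,\dots)$ need not agree). Consequently $f$ matches to order $l-1$ across each seam $x_j\in\{0,1\}$, integration by parts against smooth periodic test functions produces no boundary terms, and the classical derivatives coincide with the distributional torus derivatives. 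Combined with the $L_2$-bounds of the second step and the reduction of the first, this yields $f\in\mathbb{H}^l(\mathbb{T}^d)$.

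I expect the third step to be the main obstacle. The weight factor contains $\sqrt{\gamma_j'}$, whose derivatives are a priori singular wherever $\gamma_j'$ degenerates at the endpoints; it is only the hypothesis $D^{\boldsymbol{\alpha}}\Psi\in C_0([0,1]^d)$ that simultaneously tames this singularity and enforces the smooth periodic gluing, while the $L_2([0,1]^d,\boldsymbol{\omega})$ hypothesis on $g$ controls the possible boundary growth of $g$ itself. The bulk of the remaining (routine) work is the bookkeeping, via Leibniz and the coordinatewise Fa\`a di Bruno formula, that reduces every contribution to these two controlled factors.
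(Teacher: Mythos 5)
Your proposal is essentially sound, but note that the paper itself contains no proof of this statement: Lemma~2.1 is quoted as a known result (Theorem~4 of Nasdala and Potts \cite{NasdalaandPotts}), so there is no internal argument to compare against. Judged on its own merits, your route is the natural one and, in spirit, the same as that of the cited source: reduce membership in $\mathbb{H}^l(\mathbb{T}^d)$ to $L_2(\mathbb{T}^d)$-bounds on weak derivatives of total order at most $l$ via Parseval and the equivalence $(1+|\bold k|)^{2l}\asymp\sum_{|\boldsymbol{\alpha}|\le l}\bold k^{2\boldsymbol{\alpha}}$; compute $D^{\boldsymbol{\alpha}}f$ classically on the open cube by Leibniz and coordinatewise Fa\`a di Bruno, where $g,\boldsymbol{\gamma}\in C^l_{\text{mix}}([0,1]^d)$ make the $D^{\boldsymbol{\beta}}(g\circ\boldsymbol{\gamma})$ factors continuous on the closed cube and the hypothesis puts every factor $D^{\boldsymbol{\alpha}-\boldsymbol{\beta}}\Psi$ in $C_0([0,1]^d)$; and use exactly that $C_0$ property to conclude that all the relevant derivatives of $f$ extend continuously to the closed cube and vanish on its boundary, so the periodic extension has no seam contributions and classical derivatives agree with the distributional derivatives on the torus. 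Two small points deserve cleanup: the matching across seams is to order $l$ (every term of every $D^{\boldsymbol{\alpha}}f$ with $\|\boldsymbol{\alpha}\|_\infty\le l$ vanishes there), not $l-1$ as you wrote, which is more than enough; and the fallback sentence about $g$ being ``merely $L_2$ near the boundary'' is both unnecessary (since $C^l_{\text{mix}}([0,1]^d)$ already gives continuity, hence boundedness, on the closed cube) and unjustified as stated, because vanishing of $D^{\boldsymbol{\alpha}-\boldsymbol{\beta}}\Psi$ at the boundary does not by itself ``absorb'' an unquantified growth of $g$; as the lemma is stated, the $L_2([0,1]^d,\boldsymbol{\omega})$ hypothesis enters only through the norm-preservation identity for the base case, exactly as you say.
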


From  Lemma \ref{lemmaH(T)},  one can observe that  derivatives of the transformation  $\gamma_j$  vanish at boundaries. The sine transformation (see Formula $(26)$ in  \cite{NasdalaandPotts}) is a  typical torus-to-cube transformation. Moreover, Nasdala and Potts \cite{NasdalaandPotts} also introduced a   family of   parameterized torus-to-cube transformation, for example, logarithmic transformation (Formula $(24)$ in \cite{NasdalaandPotts}), error function transformation (Formula $(25)$ in \cite{NasdalaandPotts}).   
\subsection{\textit{MQ trigonometric function}}
MQ trigonometric function   $\sqrt{c^2+\sin^2 x/2}$ was firstly constructed  in reference \cite{GaoandWu1} to get a smooth and periodic kernel  by smoothing away all discontinuous points of  the second-order trigonometric kernel $|\sin x/2|$. Later, reference \cite{SunandGao} provided a novel   interpretation by regarding MQ trigonometric function as a restriction (up to a constant) of the well known  MQ kernel (proposed by Hardy \cite{Hardy})  to the unit circle, which allows to construct more versatile smooth and periodic kernels from radial kernels.   We adopt a scaled version  to get a $1$-periodic MQ trigonometric function defined over  $\mathbb{T}$ as \begin{equation}\label{multiquadric}
 \phi_c(x)=\frac{1}{2\pi}\sqrt{c^2+\sin^2\pi x}, \ x\in \mathbb{T}.
  \end{equation} Here $c\leq e^{-1}$ is a small positive shape parameter and the rescaling coefficient $\frac{1}{2\pi}$  is to ensure that Inequality \eqref{integral} holds. Moreover, the scaled version naturally leads to a tensor-product kernel \eqref{tensorkernel} defined over torus $\mathbb{T}^d$ satisfying Inequality \eqref{erortoone}, which warrants that scheme \eqref{general} is a quasi-interpolant defined over $\mathbb{T}^d$. MQ trigonometric function carries over many fair properties of MQ kernel. In particular, we have the following two lemmas (adapted from reference \cite{GaoandWu2}).
\begin{lemma}\label{highorderofphi}
Let $\ell$ be any given nonnegative integer and $\phi_c^{(\ell)}(x)$ be the  $\ell$-th order derivative with respect to $x$. Then, there is a constant $C(\ell)$ depending only on $\ell$  such that  
\begin{equation}\label{eq:lem_highorderofphi1}
    \left|\phi_c^{(\ell)}( x)\right|\leq C(\ell)c^{1-\ell},  \ \   \sin \pi x\leq c,
\end{equation}
and
\begin{equation}\label{eq:lem_highorderofphi2}
    \left|\phi_c^{(\ell)}(x)\right|\leq  C(\ell)\sin^{1-\ell} \pi x, \ \ \sin \pi x\geq c.
\end{equation}
In particular, when $\ell=2$, we have that the inequality
\begin{equation}\label{eq:lem_highorderofphi3}
    \int_{\mathbb{T}}|\phi_c''(x)|dx\leq (24+12\ln 2)|\ln c|
\end{equation}
holds true for any $c\leq e^{-1}$.
\end{lemma}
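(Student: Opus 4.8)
The plan is to set $u(x)=c^2+\sin^2\pi x$ and write $\phi_c=\frac{1}{2\pi}u^{1/2}$, reducing everything to controlling derivatives of an outer power of the smooth trigonometric polynomial $u$. Applying the Fa\`a di Bruno formula to $u^{1/2}$ expresses $\phi_c^{(\ell)}$ (for $\ell\ge 1$) as a finite sum of terms
\[
a_m\,u^{1/2-m}\,u^{(k_1)}\cdots u^{(k_m)},\qquad 1\le m\le \ell,\quad k_i\ge 1,\quad k_1+\cdots+k_m=\ell,
\]
with constants $a_m$ depending only on $m$; the case $\ell=0$ is immediate. Two elementary observations make this tractable. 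Since $u=c^2+\tfrac12-\tfrac12\cos 2\pi x$, every derivative $u^{(k)}$ with $k\ge 1$ is a trigonometric polynomial bounded by $\tfrac12(2\pi)^k$, independently of $c$; moreover $u'=\pi\sin 2\pi x=2\pi\sin\pi x\cos\pi x$ carries an explicit factor $\sin\pi x$. Thus, if a given term contains $p$ first-order factors ($k_i=1$), it is bounded by $C\,u^{1/2-m}\,|\sin\pi x|^{p}$, and because the remaining $m-p$ factors each have order $\ge 2$ we get $p+2(m-p)\le k_1+\cdots+k_m=\ell$, i.e.\ $p\ge 2m-\ell$. I expect this uniform bookkeeping of the number of vanishing factors to be the crux of the argument.

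With the bound $C\,u^{1/2-m}|\sin\pi x|^{p}$, $p\ge\max\{0,2m-\ell\}$, in hand, \eqref{eq:lem_highorderofphi1} and \eqref{eq:lem_highorderofphi2} follow by inspecting exponents. On $\{\sin\pi x\le c\}$ I use $u\ge c^2$ and $|\sin\pi x|\le c$ to obtain $u^{1/2-m}|\sin\pi x|^{p}\le c^{1-2m+p}$; since $p\ge 2m-\ell$ the exponent satisfies $1-2m+p\ge 1-\ell$, and $c\le e^{-1}<1$ then gives $\le c^{1-\ell}$. On $\{\sin\pi x\ge c\}$ I instead use $u\ge \sin^2\pi x$, so $u^{1/2-m}\le\sin^{1-2m}\pi x$ and the term is $\le C\sin^{1-2m+p}\pi x$; the same exponent inequality $1-2m+p\ge 1-\ell$ together with $0<\sin\pi x\le 1$ yields $\le C\sin^{1-\ell}\pi x$. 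Summing the finitely many terms collapses all combinatorial factors into a single constant $C(\ell)$.

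For the $L_1$ estimate \eqref{eq:lem_highorderofphi3} I specialize to $\ell=2$, where direct differentiation gives
\[
\phi_c''=\frac{\pi}{2}\cos(2\pi x)\,u^{-1/2}-\frac{\pi}{8}\sin^2(2\pi x)\,u^{-3/2}.
\]
Using $|\cos 2\pi x|\le 1$ and $\sin^2 2\pi x=4\sin^2\pi x\cos^2\pi x\le 4u$ produces the clean pointwise bound $|\phi_c''(x)|\le \pi\,u^{-1/2}=\pi(c^2+\sin^2\pi x)^{-1/2}$, so it suffices to estimate $\int_0^1 u^{-1/2}\,dx$. I split $[0,1]$ at the level $\sin\pi x=c$. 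On $\{\sin\pi x\le c\}$, whose measure is $\tfrac{2}{\pi}\arcsin c\le c$, the integrand $u^{-1/2}$ is at most $c^{-1}$, contributing a bounded quantity. On $\{\sin\pi x\ge c\}$ I use $u^{-1/2}\le\csc\pi x$ and the antiderivative $\int\csc(\pi x)\,dx=\tfrac1\pi\ln\tan(\pi x/2)$; symmetry about $x=\tfrac12$ gives $\int_{\{\sin\pi x\ge c\}}\csc\pi x\,dx=\tfrac{2}{\pi}\ln\frac{1+\sqrt{1-c^2}}{c}\le\tfrac{2}{\pi}\ln\frac{2}{c}$, which carries the $|\ln c|$ growth. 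Adding the two contributions and absorbing the $O(1)$ terms using $|\ln c|\ge 1$ for $c\le e^{-1}$ yields a bound of the claimed form $(24+12\ln 2)|\ln c|$; the stated constant is deliberately generous, and the estimates above in fact give a substantially smaller one.
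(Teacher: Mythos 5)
Your proposal is correct, and its skeleton matches the paper's: a Fa\`a di Bruno expansion to control the derivatives, then a split of the $L_1$ integral at the level set $\sin\pi x=c$, with the cosecant antiderivative supplying the $|\ln c|$ growth. The differences are nonetheless worth recording. For \eqref{eq:lem_highorderofphi1}--\eqref{eq:lem_highorderofphi2} the paper does not give a self-contained argument: after writing down the Fa\`a di Bruno expansion it cites Lemma~3.1 of Gao--Wu for the conclusion. Your exponent bookkeeping --- each first-order factor $u'=2\pi\sin\pi x\cos\pi x$ contributes a vanishing factor $\sin\pi x$, and the count $p$ of such factors satisfies $p\ge 2m-\ell$, so that $u^{1/2-m}|\sin\pi x|^p$ is bounded by $c^{1-\ell}$ on $\{\sin\pi x\le c\}$ and by $\sin^{1-\ell}\pi x$ on $\{\sin\pi x\ge c\}$ --- is precisely the content being outsourced there, and you carry it out correctly. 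For \eqref{eq:lem_highorderofphi3} the paper inserts the two regional $\ell=2$ bounds ($|\phi_c''|\lesssim c^{-1}$ and $|\phi_c''|\lesssim 1/\sin\pi x$) into the corresponding pieces of the integral; you instead differentiate $\phi_c$ explicitly and obtain the single unified pointwise bound $|\phi_c''(x)|\le\pi\left(c^2+\sin^2\pi x\right)^{-1/2}$, which dominates both regional bounds simultaneously. After the same splitting and the same $\csc$ integral (your evaluation $\frac{2}{\pi}\ln\frac{1+\sqrt{1-c^2}}{c}$ agrees with the paper's computation \eqref{bound} up to the change of variables), this yields the constant $\pi+2+2\ln 2\approx 6.5$, well below the stated $24+12\ln 2\approx 32.3$, so the claimed inequality follows a fortiori. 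In short, your route is the same in structure but is fully self-contained where the paper leans on an external lemma, and it produces a sharper constant at the modest cost of one explicit differentiation.
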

\begin{proof}
Note that $\phi_c(x)$ can be viewed as a composite function $g_c(h(x))=\sqrt{c^2+h(x)}$ with $h(x)=\sin^2\pi x$. Based on Fa\`{a} di Bruno's formula \cite[Thm.~2]{Roman}, we have 
$$\phi_c^{(\ell)}(x)=\frac{1}{2\pi}\sum \frac{\ell !}{i_1!\cdots i_{\ell}!}(g_c^{(i)})(h(x))\Big(\frac{h'(x)}{1!}\Big)^{i_1}\cdots \Big(\frac{h^{(\ell)}(x)}{\ell!}\Big)^{i_{\ell}}.$$
Here $i=i_1+\cdots+i_{\ell}$ and the summation is taken all over $i_1,\cdots i_{\ell}$, for which $i_1+2i_2+\cdots +\ell i_{\ell}=\ell$. 
Consequently,  Inequalities \eqref{eq:lem_highorderofphi1} and \eqref{eq:lem_highorderofphi2} can be derived from Lemma $3.1$ in \cite{GaoandWu2} with $C(\ell)=(2\pi)^{\ell-1}\sum \frac{\ell!}{i_1!\cdots i_{\ell}!}$. Here we only need to prove Inequality \eqref{eq:lem_highorderofphi3}.

  Observe that $1\leq |\ln c|$ for $c\leq e^{-1}$.  We first split the above integral into two parts to get 
    \begin{equation*}
    \begin{aligned}
        \int_{\mathbb{T}}|\phi_c''(x)|dx&=\int_{\sin\pi x\leq c, x\in \mathbb{T}}|\phi_{c}''(x)|dx+\int_{\sin\pi x\geq c, x\in \mathbb{T}}|\phi_{c}''(x)|dx\\
        &\leq 6\pi\Bigg(\int_{\sin\pi x\leq c, x\in \mathbb{T}}\frac{1}{c}dx+\int_{\sin\pi x\geq c,, x\in \mathbb{T}}\frac{1}{\sin\pi x}dx\Bigg)\\
        &\leq 6\Bigg(\int_{\sin\mu\leq c,\mu\in [0,\pi)}\frac{1}{c}d\mu+\int_{\sin\mu\geq c,\mu\in [0,\pi)}\frac{1}{\sin\mu}d\mu\Bigg)\\
        &\leq 12+6\int_{\sin\mu\geq c,\mu\in [0,\pi)}\frac{1}{\sin\mu}d\mu\\
         &\leq 12|\ln c|+6\int_{\sin\mu\geq c,\mu\in [0,\pi)}\frac{1}{\sin\mu}d\mu.
    \end{aligned}   
    \end{equation*}  
    
    Moreover, we have 
    \begin{equation}\label{bound}
        \begin{split}
        \int_{ \sin \mu\geq c,\mu\in [0,\pi)}\frac{1}{\sin \mu}d\mu
        &=\int_{ \arcsin c}^{\pi-\arcsin c}\frac{1}{\sin \mu}d\mu=\frac{1}{2}\ln\Big(\frac{1-\cos \mu}{1+\cos \mu}\Big)\Big |_{ \arcsin c}^{\pi-\arcsin c}\\
        &=\ln\frac{1+\sqrt{1-c^2}}{1-\sqrt{1-c^2}}= \ln\frac{(1+\sqrt{1-c^2})^2}{c^2}\\
            &\leq2(\ln 2-\ln c)\leq 2(1+\ln 2) |\ln c|.
        \end{split}
    \end{equation}
    Combining above results  completes the proof.
\end{proof}
\begin{lemma}\label{convolution}
   Let $\Psi_c(x)=\phi_c^{''}(x)+\pi^2\phi_c(x)$. Then we have
 \begin{equation}\label{integral}
 0\leq \int_{\mathbb{T}}\Psi_c(x)dx-1
 \leq \frac{(1+|\ln c|)c^2}{2}.
 \end{equation}
  Moreover,  if we define  $$\bold{C}_{\Psi_c}(f)(x)=f*\Psi_c(x)=\int_{\mathbb{T}}f(t)\Psi_c(x-t)dt,$$ then there is a constant $C_0>0$ such that
\begin{equation}\label{con}
\left\|\bold{C}_{\Psi_c}(f)-f\right\|_{\infty}\leq C_0\cdot c^2(c^2+1)|\ln c|\end{equation}
holds true for any $f\in C^2(\mathbb{T})$.
\end{lemma}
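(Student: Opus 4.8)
The plan is to dispatch the two assertions \eqref{integral} and \eqref{con} in turn, arranging matters so that the second one collapses almost entirely onto the first.

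\emph{Step 1: the moment estimate \eqref{integral}.} Since $\phi_c$ is smooth and $1$-periodic, the derivative term integrates to zero, $\int_{\mathbb{T}}\phi_c''(x)\,dx=0$, whence $\int_{\mathbb{T}}\Psi_c(x)\,dx=\pi^2\int_{\mathbb{T}}\phi_c(x)\,dx$. Inserting \eqref{multiquadric} and rescaling $u=\pi x$ turns this into $\tfrac12\int_0^\pi\sqrt{c^2+\sin^2 u}\,du$. The lower bound in \eqref{integral} is then immediate from $\sqrt{c^2+\sin^2 u}\ge|\sin u|$ and $\int_0^\pi|\sin u|\,du=2$. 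For the upper bound I would write $\sqrt{c^2+\sin^2u}-|\sin u|=c^2/(\sqrt{c^2+\sin^2u}+|\sin u|)$ and split $[0,\pi)$ according to whether $\sin u\le c$ or $\sin u\ge c$: on the first region the integrand is at most $c$ and the region has measure $O(\arcsin c)=O(c)$, while on the second region the integrand is at most $c^2/(2\sin u)$, so the factor $|\ln c|$ is produced exactly by the singular integral $\int_{\sin u\ge c}\frac{du}{\sin u}$, which is already controlled in \eqref{bound}.

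\emph{Step 2: reformulating the convolution error.} The crucial structural observation is that the kernel is engineered so that the $c=0$ limit $\phi_0(x):=\tfrac1{2\pi}|\sin\pi x|$ is the Green's function of $L:=\frac{d^2}{dx^2}+\pi^2$ on $\mathbb{T}$. Indeed, from the Fourier coefficients $\widehat{|\sin\pi x|}(k)=\tfrac{2}{\pi(1-4k^2)}$ one checks $\pi^2(1-4k^2)\widehat{\phi_0}(k)\equiv1$, i.e.\ $L\phi_0=\delta$ and $\mathbf C_{\Psi_0}(f)=f$ for every $f$ (writing $\Psi_0$ for $\Psi_c$ at $c=0$). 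Transferring the two derivatives in $\Psi_c=\phi_c''+\pi^2\phi_c$ onto $f$ by integrating by parts twice (legitimate since $f\in C^2(\mathbb{T})$ and the periodic boundary terms vanish) gives $\mathbf C_{\Psi_c}(f)=(f''+\pi^2f)*\phi_c$, and the same manipulation at $c=0$ reproduces $f$. Subtracting,
\[
\mathbf C_{\Psi_c}(f)-f=\bigl(f''+\pi^2 f\bigr)*(\phi_c-\phi_0).
\]

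\emph{Step 3: conclusion via Young's inequality.} Applying $\|g*h\|_\infty\le\|g\|_\infty\|h\|_{L_1(\mathbb{T})}$ with $g=f''+\pi^2 f$ yields $\|\mathbf C_{\Psi_c}(f)-f\|_\infty\le(\|f''\|_\infty+\pi^2\|f\|_\infty)\,\|\phi_c-\phi_0\|_{L_1(\mathbb{T})}$. Since $\phi_c\ge\phi_0$ pointwise, this last $L_1$-norm is once more a zeroth moment: $\|\phi_c-\phi_0\|_{L_1(\mathbb{T})}=\int_{\mathbb{T}}(\phi_c-\phi_0)=\pi^{-2}\bigl(\int_{\mathbb{T}}\Psi_c-1\bigr)$, which \eqref{integral} already bounds by $\tfrac{1}{2\pi^2}(1+|\ln c|)c^2$. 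Absorbing $\|f''\|_\infty+\pi^2\|f\|_\infty$ and the numerical factors into $C_0$, and using $1\le|\ln c|\le(c^2+1)|\ln c|$ for $c\le e^{-1}$, delivers \eqref{con} in the stated form (with $C_0$ depending on $f$ through $\|f\|_{C^2(\mathbb{T})}$, the coarse factor $(c^2+1)$ merely replacing the sharper $c^2|\ln c|$).

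\emph{Main obstacle.} Conceptually the decisive move is the identity $\mathbf C_{\Psi_0}(f)=f$, i.e.\ recognizing $\phi_0$ as the Green's function of $L$; once it is available, all of \eqref{con} reduces to \eqref{integral} plus one application of Young's inequality. The genuinely delicate computational point is the upper bound in \eqref{integral} near the zeros of $\sin$, where the integrand decays only like $1/\sin u$ and produces the $|\ln c|$. Pinning the precise constant there is subtle: the exact evaluation $\int_0^\pi\sqrt{c^2+\sin^2u}\,du=2\sqrt{1+c^2}\,E\!\bigl((1+c^2)^{-1/2}\bigr)$ via the complete elliptic integral shows the leading correction is of size $\tfrac{c^2}{2}(|\ln c|+O(1))$, so while the dominant behavior $c^2|\ln c|$ is robust, I would verify carefully that the crude region-splitting indeed yields the exact coefficient claimed, or else fold the harmless $O(1)$ discrepancy into the constant.
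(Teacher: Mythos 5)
Your proposal is correct in substance but takes a genuinely different route from the paper's. The paper's own proof is essentially a citation: it reduces $\int_{\mathbb{T}}\Psi_c(x)\,dx-1$ to $\frac14\int_0^{2\pi}\sqrt{c^2+\sin^2 (t/2)}\,dt-1$ and then invokes Lemmas 3.2 and 3.3 of \cite{GaoandWu2} for both \eqref{integral} and \eqref{con}. You instead argue self-containedly, and your key structural move---recognizing $\phi_0(x)=\tfrac{1}{2\pi}|\sin\pi x|$ as the periodic Green's function of $L=\tfrac{d^2}{dx^2}+\pi^2$, so that $\mathbf{C}_{\Psi_0}(f)=f$ and
\[
\mathbf{C}_{\Psi_c}(f)-f=\bigl(f''+\pi^2 f\bigr)*(\phi_c-\phi_0)
\]
---is sound: the Fourier check $\pi^2(1-4k^2)\widehat{\phi_0}(k)\equiv 1$ is correct, the double integration by parts is legitimate for $f\in C^2(\mathbb{T})$, and $\phi_c\geq\phi_0$ gives $\|\phi_c-\phi_0\|_{L_1(\mathbb{T})}=\pi^{-2}\bigl(\int_{\mathbb{T}}\Psi_c\,dx-1\bigr)$. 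With Young's inequality this collapses \eqref{con} onto \eqref{integral}, which the paper's citation-based treatment never makes visible. Your route also has two side benefits: it exposes the mechanism ($\phi_c$ is a smoothed Green's function, so $\Psi_c$ is an approximate identity), and it makes explicit that $C_0$ in \eqref{con} must depend on $f$ through $\|f''\|_\infty+\pi^2\|f\|_\infty$---the only tenable reading of the lemma, since a constant uniform over all $f\in C^2(\mathbb{T})$ is impossible by homogeneity.

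The one genuine shortfall is the precise constant in \eqref{integral}, and your closing caveat is well founded---in fact the discrepancy cannot be repaired by any proof. Your splitting yields $\int_{\mathbb{T}}\Psi_c\,dx-1\leq \tfrac{c^2}{2}\bigl(|\ln c|+\pi+\ln 2\bigr)$ (the $\sin u\leq c$ region contributes up to $\tfrac{\pi}{2}c^2$, and \eqref{bound} contributes $\tfrac{c^2}{2}(\ln 2+|\ln c|)$), which is weaker than the stated $\tfrac{c^2}{2}(1+|\ln c|)$; but the elliptic-integral expansion you quote gives the sharp behavior
\[
\int_{\mathbb{T}}\Psi_c(x)\,dx-1=\frac{c^2}{2}\Bigl(|\ln c|+\ln 4+\tfrac12\Bigr)+O\bigl(c^4|\ln c|\bigr),
\]
and $\ln 4+\tfrac12\approx 1.89>1$, so \eqref{integral} as printed is actually violated for small $c$ (numerically at $c=0.1$ the left-hand side is $\approx 0.0209$ while the claimed bound is $\approx 0.0165$). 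Thus the correct conclusion of Step~1 is \eqref{integral} with the factor $\tfrac12$ replaced by an absolute constant (e.g.\ $\int_{\mathbb{T}}\Psi_c\,dx-1\leq 2c^2(1+|\ln c|)$ follows from your splitting). This emendation is harmless everywhere the lemma is used---Lemma \ref{integralvalues} and Theorems \ref{convolutionerror}--\ref{discretizationerror1} only need $\int_{\mathbb{T}}\Psi_c\,dx-1=O(c^2|\ln c|)$, and your Step~3 goes through verbatim with the larger constant absorbed into $C_0$---but both your write-up and the lemma's statement should carry the weaker form.
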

\begin{proof}
    Observe that
  \begin{equation*}
        \begin{split}
    \int_{\mathbb{T}}\Psi_c(x)dx-1&=\int_{\mathbb{T}}\phi_c^{''}(x)+\pi^2\phi_c(x)dx\\
&=\pi^2\int_{\mathbb{T}}\phi_c(x)dx\\
        &=\frac{\pi}{2}\int_{\mathbb{T}}\sqrt{c^2+\sin^2\pi x}dx\\
           &=\frac{1}{4}\int_{0}^{2\pi}\sqrt{c^2+\sin^2t/2}dt.
        \end{split}
    \end{equation*}   
     The lemma follows directly from  Lemma $3.2$ and Lemma $3.3$ in reference \cite{GaoandWu2}.
    \end{proof}
\subsection{\textit{Sparse grids}}

We introduce sparse grids on the hypercube  $[0,1]^{d}$ (see \cite{Garcke1}).
 Let $\mathbf{l}=(l_{1},l_{2},\cdots ,l_{d})\in \mathbb{N}^{d}$ be a multi-index  and  $ \mathbb{W}_{\mathbf{l}}$ be a directionally uniform grid in $[0,1]^{d}$ with the mesh size $h_{\mathbf{l}}:=2^{-\mathbf{l}}=(2^{-l_{1}},2^{-l_{2}},\cdots 2^{-l_{d}})$.
By applying the union operator to    $\mathbb{W}_{\mathbf{l}}$, we define the $d$-dimensional sparse grid $\mathbb{W}_{n,d}$ at level $n \geqslant 1$  in the form
\begin{equation}\label{subgrid}
\mathbb{W}_{n,d}=\bigcup_{|\mathbf{l}|=n +d-1}^{}\mathbb{W}_{\mathbf{l}}.
 \end{equation}
%
%

   Obviously, there is much redundancy in the above definition since subgrids $\{\mathbb{W}_{\mathbf{l}}\}$  are not disjoint. Regarding this, the number of grid points in the sparse grid is calculated via
  \begin{equation}\label{numberofpoints}
  |\mathbb{W}_{n,d}|=\sum_{k=0}^d(-1)^{k+d-1}\binom{d-1}{k}\sum_{|\mathbf{l}|=n+k}|\mathbb{W}_{\mathbf{l}}|.
  \end{equation}  Figure $1$ shows a toy example of full grid and sparse grid under $n=7,d=2$. We can see that the sparse grid can reduce a large amount of grid points than its full counterpart, which may be one reason why the sparse grid based approach can mitigate the curse of dimensionality for high-dimensional function approximation.
\begin{figure}[!htp]
\centering
\subfigure[Full grids]{
\begin{minipage}[t]{0.4\linewidth}
\centering
\includegraphics[scale=0.3]{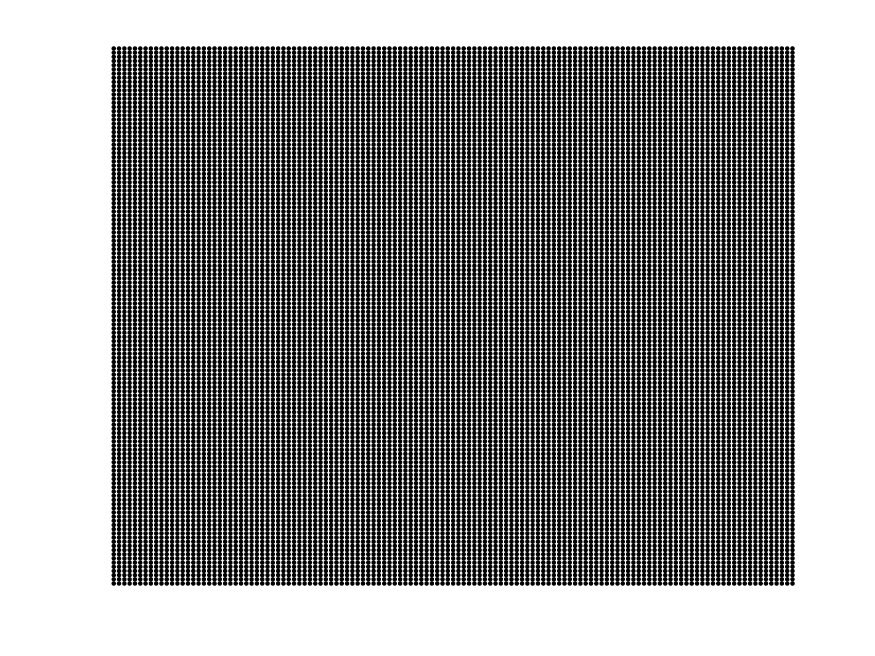}
\end{minipage}%
}%
\subfigure[Sparse grids]{
\begin{minipage}[t]{0.4\linewidth}
\centering
\includegraphics[scale=0.3]{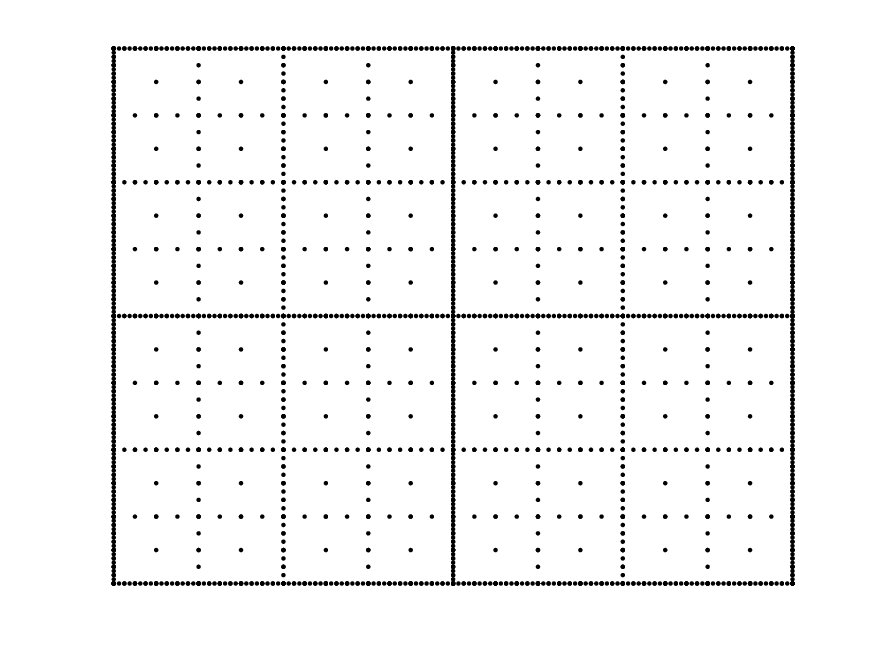}
\end{minipage}%
}%
\caption{ Full grid and sparse grid at level 7 on $[0,1]^{2}$, with $129^{2}=16641$ and $1281$ grid points, respectively}
\end{figure}
\section {\textit{The main result}}
 In   quasi-interpolation literature \cite{BuhmannandJag}, to facilitate theoretical analysis, one often studies quasi-interpolation  in the whole space $\mathbb{R}^d$. However, data in real world approximation problems are often sampled over a bounded domain, say $[0,\ 1]^d$. If we apply the quasi-interpolation (defined over $\mathbb{R}^d$) directly to the data, then it always yields the well-known boundary problem, that is, the resulting quasi-interpolation provides poor approximation at the boundaries of the bounded domain. Boundary extension  has been a widely used method for circumventing   boundary problem \cite{BeatsonPowell}, \cite{Jeong}, \cite{WuandSchaback}.  Another approach  adopts   error correction technique either by constructing iterated quasi-interpolation \cite{Fasshauer4} or by constructing multilevel quasi-interpolation \cite{Franz}, \cite{Usta}.  By introducing a periodization technique \cite{NasdalaandPotts}  into quasi-interpolation, we will provide a simple  approach  (especially for high-dimensional cases)  transferring non-periodic function approximation to periodic function approximation without introducing boundary problems. 
 
 Construction   of our quasi-interpolation defined over  hypercube $[0,\ 1]^d$ consists of three steps. We first transform a \textbf{non-periodic} approximand $g$  defined over $[0,\ 1]^d$  into a \textbf{periodic} function $f$ defined over the torus $\mathbb{T}^d$ via the torus-to-cube transformation $\boldsymbol{\gamma}$ (introduced in Subsection $2.2$). Then we focus on constructing a (sparse grid) quasi-interpolation scheme $Q_{n,d}f$ (see Equation \eqref{sparsequasi}) of $f$ over the torus $\mathbb{T}^d$. Finally, based on $Q_{n,d}f$, we construct a corresponding quasi-interpolation $Qg$ (see Equation \eqref{nonperiodic}) of $g$ over the hypercube $[0,\ 1]^d$.
 \begin{remark}
  In keeping the  exposition transparent, we demonstrate our construction process through constructing sparse grid quasi-interpolation as a concrete example. However, the basic idea is rather general and goes through for many other quasi-interpolation schemes. In addition, based on the embedding inequality $\|\cdot\|_{p,\Omega}\leq |\Omega|^{1/p}\|\cdot\|_{\infty,\Omega}$, for any $1\leq p<\infty$ and bounded domain $\Omega$, we only consider the  $L_{\infty}$-norm  approximation error. 
 \end{remark}
\subsection{Sparse grid quasi-interpolation for approximating periodic functions defined on $\mathbb{T}^d$}
Fourier series expansion is  a famous and powerful  tool for periodic function approximation, see e.g.,   \cite{Kaarnioja}, \cite{Kammerer}, \cite{Morrow}, \cite{NasdalaandPotts}, \cite{Peigney},  \cite{Potts}, \cite{Potts1}, \cite{Sloannew}. The basic idea is to first approximating an (unknown) periodic function using partial sum (with finite terms) of its Fourier series expansion and then evaluating corresponding Fourier coefficients numerically based on the given periodic data. However, as pointed out by Peigney \cite{Peigney}, the number of Fourier coefficients in the partial sum grows exponentially with the dimension of the approximation problem.  Hubbert et al., \cite{Hubbert} proposed a sparse grid Gaussian convolution approximation, which avoids computing a large amount of Fourier coefficients due to the fast decay of Gaussian kernel.
Kolomoitsev and his coauthors (\cite{Kolomoitsev0}, \cite{Kolomoitsev1}) proposed efficient approaches for periodic function approximation. They first applied the periodic extension to a scaled non-periodic kernel satisfying Strang-Fix conditions  to construct a periodic kernel. Then they constructed approximation with the periodic kernel under the  framework of   quasi-projection operators thoroughly discussed by Kolomoitsev and Skopina \cite{Kolomoitsev2}. Corresponding error estimates were derived in terms of periodic Strang-Fix conditions satisfied by the periodic kernel. Recently,  Kolomoitsev et al., \cite{Kolomoitsev3} studied approximation properties of multivariate periodic functions from weighted Wiener spaces by sparse grid methods based on quasi-interpolation with trigonometric polynomials.

In this subsection, we provide a general quasi-interpolation approach for periodic function approximation by viewing quasi-interpolation as a two-step procedure  (first convolution and then discretization of the convolution \cite{GeneralizedWu}), which allows for more choices of kernels (e.g., trigonometric B-spline, MQ trigonometric function, the restriction of a radial function to the circle) and sampling centers (e.g., uniform centers, directionally uniform centers, lattices, low-discrepancy centers, random  centers). 
 As a concrete example, we choose the tensor-product MQ trigonometric  kernel  and the sparse grid  leading to a local and shape-preserving quasi-interpolant (Lemma $3.2$) that can alleviate the curse of dimensionality while giving nice approximation results to both target function and  high-order derivatives (Theorem $3.3$).

We first construct a tensor-product multiquadric trigonometric kernel.  Applying the second-order trigonometric divided difference (with respect to the variable $t_k$ \cite{Koch}) to $\phi_{c_k}(x_k-t_k)$, we have \begin{equation}\label{newpsi}
\Psi_{c_k,h_k}(x_k):=[-h_k,0,h_k]_{Tr_2}\phi_{c_k}(x_k-t_k)=2\pi^2\frac{\phi_{c_k}(x_k+h_k)-2\cos(\pi h_k)\phi_{c_k}(x_k)+\phi_{c_k}(x_k-h_k)}{\sin (2\pi h_k)\sin(\pi h_k)},\ k=1,2,\cdots,d.
\end{equation}
Note that $\Psi_{c_k,h_k}(x_k)$ is a (trigonometrically) \textbf{discretized version}    of $\Psi_{c_k}(x_k)=(D_{x_k}^2+\pi^2I)\phi_{c_k}(x_k)$  (defined in Lemma \ref{convolution}) with a stepsize $h_k$.
Moreover, based on these univariate functions $\Psi_{c_k, h_k}$, we construct a multivariate periodic kernel  in the tensor-product form
\begin{equation}\label{tensorkernel}
\Psi_{\mathbf{c},\mathbf{h}}(\bold x)=\prod_{k=1}^d\Psi_{c_k,h_k}(x_k), \ \bold x=(x_1,\cdots,x_d)\in \mathbb{T}^d, \ \mathbf{c}=(c_1,\cdots,c_d), \ \bold h=(h_1,\cdots,h_d).
\end{equation}
Furthermore, we can derive the following fair properties as given in Lemma \ref{integralvalues} and Theorem \ref{convolutionerror}.
\begin{lemma}\label{integralvalues}
Let $\Psi_{\mathbf{c},\mathbf{h}}$ be defined as in Equation \eqref{tensorkernel}.  Then, there is a constant $C_{1}>0$ such that
\begin{equation}\label{erortoone}
\int_{\mathbb{T}^d}\Psi_{\mathbf{c},\mathbf{h}}(\bold x)d\bold x-1\leq C_{1}\sum_{k=1}^d(c_k^2+h_k^2)|\ln c_k|
\end{equation}
holds for all sufficiently small $c_k\leq e^{-1}$  and $h_k$.
\end{lemma}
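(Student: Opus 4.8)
The plan is to exploit the tensor-product structure of $\Psi_{\mathbf{c},\mathbf{h}}$ to reduce the claim to a one-dimensional computation, and then to control the resulting product of univariate integrals by its sum. Since $\Psi_{\mathbf{c},\mathbf{h}}(\bold x)=\prod_{k=1}^d\Psi_{c_k,h_k}(x_k)$ and $\mathbb{T}^d$ is a product domain, Fubini's theorem gives
$$\int_{\mathbb{T}^d}\Psi_{\mathbf{c},\mathbf{h}}(\bold x)\,d\bold x=\prod_{k=1}^d I_k,\qquad I_k:=\int_{\mathbb{T}}\Psi_{c_k,h_k}(x_k)\,dx_k,$$
so that the entire task becomes evaluating each $I_k$ and then estimating $\prod_k I_k-1$.

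First I would compute $I_k$ explicitly. Because integration over $\mathbb{T}$ is translation invariant, each of the three shifted copies $\phi_{c_k}(\cdot+h_k)$, $\phi_{c_k}(\cdot)$, $\phi_{c_k}(\cdot-h_k)$ appearing in \eqref{newpsi} integrates to the same value $J_{c_k}:=\int_{\mathbb{T}}\phi_{c_k}$. Substituting this collapses the divided difference to
$$I_k=2\pi^2\,\frac{2(1-\cos\pi h_k)}{\sin(2\pi h_k)\sin(\pi h_k)}\,J_{c_k}=\frac{\pi^2 J_{c_k}}{\cos^2(\pi h_k/2)\cos(\pi h_k)},$$
where the last equality uses $1-\cos\pi h_k=2\sin^2(\pi h_k/2)$ together with the double-angle identities. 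Since $\phi_{c_k}$ is smooth and $1$-periodic we have $\int_{\mathbb{T}}\phi_{c_k}''=0$, hence $\pi^2 J_{c_k}=\int_{\mathbb{T}}\Psi_{c_k}$, and Lemma \ref{convolution} supplies $1\le\int_{\mathbb{T}}\Psi_{c_k}\le 1+\tfrac12(1+|\ln c_k|)c_k^2$. Writing $A_k:=\int_{\mathbb{T}}\Psi_{c_k}=1+a_k$ and $B_k:=\bigl(\cos^2(\pi h_k/2)\cos\pi h_k\bigr)^{-1}=1+b_k$, a Taylor expansion of $B_k$ for $h_k$ below $1/2$ yields $0\le b_k\le C h_k^2$, while $0\le a_k\le (1+|\ln c_k|)c_k^2/2$. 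Since $c_k\le e^{-1}$ forces $|\ln c_k|\ge 1$, both error terms are dominated by $(c_k^2+h_k^2)|\ln c_k|$; setting $\delta_k:=I_k-1=a_k+b_k+a_kb_k\ge0$ we obtain $0\le\delta_k\le C'(c_k^2+h_k^2)|\ln c_k|$.

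It remains to bound $\prod_{k=1}^d(1+\delta_k)-1$, and here I would invoke the elementary estimates $\prod_k(1+\delta_k)\le\exp\bigl(\sum_k\delta_k\bigr)$ and $e^x-1\le x\,e^x$ for $x\ge0$, which together give $\prod_k(1+\delta_k)-1\le\bigl(\sum_k\delta_k\bigr)\exp\bigl(\sum_k\delta_k\bigr)$. The main point, and the only place where the dimension could intrude, is that the hypothesis of sufficiently small $c_k,h_k$ must be read as keeping $\sum_k\delta_k$ below a fixed absolute constant $M$ (for instance $\sum_k(c_k^2+h_k^2)|\ln c_k|\le 1$), after which $\exp(\sum_k\delta_k)\le e^M$ is harmless and
$$\int_{\mathbb{T}^d}\Psi_{\mathbf{c},\mathbf{h}}(\bold x)\,d\bold x-1=\prod_{k=1}^d(1+\delta_k)-1\le e^M\sum_{k=1}^d\delta_k\le C_1\sum_{k=1}^d(c_k^2+h_k^2)|\ln c_k|,$$
with $C_1:=C'e^M$. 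The one-dimensional trigonometric simplification is purely mechanical; the genuine, if mild, obstacle is precisely this final passage from the product to the sum, namely ensuring that it costs only a dimension-free constant rather than an exponentially growing factor, which is exactly what makes the estimate meaningful for mitigating the curse of dimensionality.
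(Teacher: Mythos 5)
Your proposal is correct and follows essentially the same route as the paper's proof: Fubini reduction to univariate integrals, translation invariance plus $\int_{\mathbb{T}}\phi_{c_k}''=0$ to express $I_k$ through $\int_{\mathbb{T}}\Psi_{c_k}$, the bound \eqref{integral} from Lemma \ref{convolution}, a $1+\mathcal{O}(h_k^2)$ estimate of the trigonometric prefactor, and finally a product-to-sum estimate valid because smallness of $c_k,h_k$ keeps $\sum_k\delta_k$ below a fixed constant. The only (cosmetic) differences are that you simplify the prefactor exactly to $\bigl(\cos^2(\pi h_k/2)\cos\pi h_k\bigr)^{-1}$ where the paper just bounds it by $(\pi^2+B_kh_k^2)/\pi^2$, and you close with $\prod(1+\delta_k)\leq\exp(\sum\delta_k)$ where the paper uses a geometric-series bound; both yield the same dimension-free constant $C_1$.
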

\begin{proof}
The tensor-product structure of $\Psi_{\mathbf{c}, \mathbf{h}}$ together with  the Fubini's theorem leads to
$$\int_{\mathbb{T}^d}\Psi_{\mathbf{c},\mathbf{h}}(\bold x)d\bold x =\prod_{k=1}^d\int_{\mathbb{T}}\Psi_{c_k,h_k}(x_k)dx_k.$$
We begin with deriving the  bounds of $$\int_{\mathbb{T}}\Psi_{c_k,h_k}(x_k)dx_k, \ k=1,2,\cdots, d.$$

Observing that $\Psi_{c_k,h_k}$ is a discretized version of $\Psi_{c_k}=(D_{x_k}^2+\pi^2I)\phi_{c_k}$, we have
\begin{equation*}
\begin{split}
\int_{\mathbb{T}}\Psi_{c_k,h_k}(x_k)dx_k&=4\pi^2\frac{1-\cos(\pi h_k)}{\sin(2\pi h_k) \sin(\pi h_k)}\int_{\mathbb{T}}\phi_{c_k}(x_k)dx_k\\
&\leq (\pi^2+B_kh_k^2)\int_{\mathbb{T}}\phi_{c_k}(x_k)dx_k
\end{split}
\end{equation*}
for  constants $B_k>0$, $k=1,2,\cdots, d$. This together with $$\int_{\mathbb{T}}\phi_{c_k}(x_k)dx_k=\frac{1}{\pi^2}\int_{\mathbb{T}}(D_{x_k}^2+\pi^2I)\phi_{c_k}(x_k)dx_k$$leads to
\begin{equation*}
\int_{\mathbb{T}}\Psi_{c_k,h_k}(x_k)dx_k\leq \left(1+\frac{B_k}{\pi^2}h_k^2\right)\int_{\mathbb{T}}\Psi_{c_k}(x_k)dx_k.
\end{equation*}
Furthermore, based on Inequality \eqref{integral} and $1\leq |\ln c_k|$ for small $c_k\leq e^{-1}$, we have
\begin{equation*}  
\begin{split}
\int_{\mathbb{T}}\Psi_{c_k,h_k}(x_k)dx_k&\leq \left(1+\frac{B_k}{\pi^2}h_k^2\right)\left(1+\frac{1}{2}(|\ln c_k|+1)c_k^2\right)\\
&\leq \left(1+\frac{B_k}{\pi^2}h_k^2\right)\left(1+c_k^2|\ln c_k|\right)\leq \left(1+\frac{B_k}{\pi^2}h_k^2\right)\left(1+(c_k^2+h_k^2)|\ln c_k|\right)\\
&\leq 1+(c_k^2+h_k^2)|\ln c_k|+\frac{B_k}{\pi^2}(c_k^2+h_k^2)|\ln c_k|+\frac{B_k}{\pi^2}(c_k^2+h_k^2)|\ln c_k|\\
&\leq 1+C_k^*(c_k^2+h_k^2)|\ln c_k|, \quad \text{where}\ C_k^*=1+2B_k/\pi^2.
\end{split}
\end{equation*} 
Therefore, by letting $\alpha_k:=C_k^*(c_k^2+h_k^2)|\ln c_k|$, we obtain
\begin{equation*}
\begin{split}
\int_{\mathbb{T}^d}\Psi_{\mathbf{c},\mathbf{h}}(\bold x)d\bold x
&\leq\prod_{k=1}^d\Bigg(1+\alpha_k\Bigg)\leq 1+\sum_{j=1}^d\Bigg(\sum_{k=1}^d\alpha_k\Bigg)^j\\
&\leq 1+\frac{(\sum_{k=1}^d\alpha_k)\big[1-(\sum_{k=1}^d\alpha_k)^{d}\big]}{1-\sum_{k=1}^d\alpha_k}\\
&\leq 1+\frac{\sum_{k=1}^d\alpha_k}{1-\sum_{k=1}^d\alpha_k}.
\end{split}
\end{equation*}
Since $c_k$ and $h_k$ are sufficiently small, one can find a  positive constant $\tilde{C}$ such that $1-\sum_{k=1}^d\alpha_k\geq \tilde{C}>0$, which in turn leads to
$$\int_{\mathbb{T}^d}\Psi_{\mathbf{c},\mathbf{h}}(\bold x)d\bold x\leq 1+\frac{1}{\tilde{C}}\sum_{k=1}^d\alpha_k=1+C_1\sum_{k=1}^d(c_k^2+h_k^2)|\ln c_k|,\quad \text{where} \ C_1=\max\limits_k\{C_k^*\}/\tilde{C}.$$
\end{proof}
%

With this anisotropic tensor-product multiquadric  trigonometric kernel $\Psi_{\bold c,\bold h}$, we construct a convolution operator in the form $$\bold{C}_{\Psi_{\bold c,\bold h}}(f)(\bold x)=f*\Psi_{\bold c,\bold h}(\bold x)=\int_{\mathbb{T}^d}f(\bold t)\Psi_{\bold c,\bold h}(\bold x-\bold t)d\bold t.$$
Furthermore, corresponding bounds of  simultaneous approximation errors to both the function and its derivatives can be derived  in the following theorem.
\begin{theorem}\label{convolutionerror}
Let $\bold{C}_{\Psi_{\bold c,\bold h}}(f)$ be defined as above. Let $r$ be a nonnegative integer and $f\in W_{\infty}^{r+2}(\mathbb{T}^d)$. Then, there exists some positive constant $C_{2}$ that is independent of $\bold c$ and $\bold h$ such that the inequality
\begin{equation}\label{simcon}
\Bigg\|D^{\boldsymbol{\alpha}}f-D^{\boldsymbol{\alpha}}\bold{C}_{\Psi_{\bold c,\bold h}}(f)\Bigg\|_{\infty}\leq C_{2}\sum_{k=1}^d(c_k^2+h_k^2)|\ln c_k|
\end{equation}
holds true for any multi-index $\boldsymbol{\alpha}=(\alpha_1,\alpha_2,\cdots,\alpha_d)$ satisfying $0\leq \|\boldsymbol{\alpha}\|_{\infty}\leq r$.
\end{theorem}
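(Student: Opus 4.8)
The plan is to reduce the simultaneous estimate to the case $\boldsymbol{\alpha}=\mathbf{0}$ and then to peel off the coordinates one at a time. Since $\Psi_{\mathbf{c},\mathbf{h}}$ is a tensor product and convolution on $\mathbb{T}^d$ is translation invariant, differentiation commutes with the convolution operator, so that $D^{\boldsymbol{\alpha}}\mathbf{C}_{\Psi_{\mathbf{c},\mathbf{h}}}(f)=\mathbf{C}_{\Psi_{\mathbf{c},\mathbf{h}}}(D^{\boldsymbol{\alpha}}f)$. Writing $g=D^{\boldsymbol{\alpha}}f$, the left-hand side of \eqref{simcon} becomes $\|g-\mathbf{C}_{\Psi_{\mathbf{c},\mathbf{h}}}(g)\|_{\infty}$. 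Because $f\in W_{\infty}^{r+2}(\mathbb{T}^d)$ and $\|\boldsymbol{\alpha}\|_{\infty}\le r$, every mixed derivative $D^{\boldsymbol{\beta}}g=D^{\boldsymbol{\alpha}+\boldsymbol{\beta}}f$ with $\|\boldsymbol{\beta}\|_{\infty}\le 2$ is bounded and controlled by $\|f\|_{W_{\infty}^{r+2}}$; writing $M(g):=\max_{\|\boldsymbol{\beta}\|_{\infty}\le 2}\|D^{\boldsymbol{\beta}}g\|_{\infty}$, this mixed $C^2$-regularity is exactly what the dimension-by-dimension argument consumes. Thus it suffices to show that $\|g-\mathbf{C}_{\Psi_{\mathbf{c},\mathbf{h}}}(g)\|_{\infty}\le C\,M(g)\sum_{k=1}^d(c_k^2+h_k^2)|\ln c_k|$ with $C$ independent of $\mathbf{c},\mathbf{h}$.

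First I would settle the univariate case $d=1$, which is the heart of the matter and splits along the paper's two-step philosophy. With $\Psi_{c,h}=[-h,0,h]_{Tr_2}\phi_c$ and $\Psi_c=(D^2+\pi^2 I)\phi_c$, and setting $u=\phi_c* w$ for $w\in C^2(\mathbb{T})$, translation invariance gives $\mathbf{C}_{\Psi_c}(w)=(D^2+\pi^2 I)u$ and $\mathbf{C}_{\Psi_{c,h}}(w)=[-h,0,h]_{Tr_2}u$. The convolution error $\|\mathbf{C}_{\Psi_c}(w)-w\|_{\infty}$ is handled by Lemma \ref{convolution} together with the evenness of $\phi_c$ (which annihilates the first moment) and the mass bound \eqref{integral}, yielding $O(c^2|\ln c|)\,M(w)$. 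For the discretization error $\|[-h,0,h]_{Tr_2}u-(D^2+\pi^2 I)u\|_{\infty}$, I would expand the trigonometric divided difference via an \emph{exact} integral remainder, splitting the numerator into a symmetric second difference plus a multiple of $u$; the symmetric weight kills the first-order term and leaves a bound of order $O(h^2)\|u^{(4)}\|_{\infty}+O(h^2)\|u\|_{\infty}$.

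The decisive point, and what I expect to be the main obstacle, is to bound $\|u^{(4)}\|_{\infty}$ without invoking derivatives of $\phi_c$ beyond the second order: a naive Taylor remainder produces $\|\phi_c^{(4)}\|_{L_1}$, which blows up like $c^{-2}$ by \eqref{eq:lem_highorderofphi1} and destroys uniformity in $c$. The remedy is to distribute the derivatives as $u^{(4)}=\phi_c''* w''$, so that the convolution (Young) inequality gives $\|u^{(4)}\|_{\infty}\le\|\phi_c''\|_{L_1}\|w''\|_{\infty}$, and the only singular quantity that enters is $\|\phi_c''\|_{L_1}\le(24+12\ln 2)|\ln c|$ furnished by \eqref{eq:lem_highorderofphi3}. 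This yields a discretization error $O(h^2|\ln c|)\,M(w)$, and adding the convolution error gives the univariate bound $\|w-\mathbf{C}_{\Psi_{c,h}}(w)\|_{\infty}\le C(c^2+h^2)|\ln c|\,M(w)$. The argument must therefore be arranged so that at most $\phi_c''$ appears and every extra derivative is borne by the smooth data function $w$.

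Finally I would pass to general $d$ by induction on the number of active coordinates. Writing $\mathbf{C}_{\Psi_{\mathbf{c},\mathbf{h}}}=A\circ B$, where $B$ is the univariate convolution in the last variable and $A$ the convolution in the first $d-1$ variables, the identity $g-ABg=(I-B)g+(I-A)(Bg)$ separates the last-coordinate error, controlled by the univariate bound applied to $g$, from the remaining error, controlled by the inductive hypothesis applied to $Bg$. The crucial move is that I would never estimate the $L_{\infty}$-operator norm of $A$ directly: a triangle-inequality bound only gives $\|A\|=O\big(\prod_{k<d}|\ln c_k|\big)$, which would ruin the constant. Instead, because $A$ is applied only to smooth arguments, I bound $\|B(D^{\boldsymbol{\beta}}g)\|_{\infty}\le\|D^{\boldsymbol{\beta}}g\|_{\infty}+\|(I-B)D^{\boldsymbol{\beta}}g\|_{\infty}\le(1+o(1))M(g)$ using the univariate estimate on the smooth $D^{\boldsymbol{\beta}}g$. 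Each induction step thus contributes a factor $1+o(1)$ (bounded as the parameters tend to zero); over the fixed number $d$ of dimensions their product stays bounded, and the errors add up to $C_2\sum_{k=1}^d(c_k^2+h_k^2)|\ln c_k|$ with $C_2$ independent of $\mathbf{c}$ and $\mathbf{h}$, as claimed.
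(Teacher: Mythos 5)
Your proposal is correct and follows essentially the same route as the paper's proof: reduce to $\boldsymbol{\alpha}=\mathbf{0}$ by commuting derivatives with the convolution, split the univariate error into the convolution error (Lemma \ref{convolution}) plus an $O(h^2)$ discretization error whose only singular factor is $\|\phi_c''\|_{L_1}=O(|\ln c|)$ from \eqref{eq:lem_highorderofphi3}, and then induct on the dimension using the near-unit mass of the remaining kernels (Lemma \ref{integralvalues}). The differences are only bookkeeping: the paper Taylor-expands the kernel as $\Psi_{c,h}=\Psi_c+\bar{B}h^2\phi_c^{(4)}(\eta)$ and moves two derivatives onto $f$ by integrating by parts twice, whereas you Taylor-expand the mollified function $u=\phi_c*w$ and distribute derivatives via Young's inequality ($u^{(4)}=\phi_c''*w''$); likewise the paper's induction telescopes $f^{(\boldsymbol{\alpha})}(\mathbf{x})-f^{(\boldsymbol{\alpha})}(\mathbf{t})$ pointwise inside the integral, while you use the operator identity $I-AB=(I-B)+(I-A)B$ — mirror images of the same argument.
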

\begin{proof}
Due to the periodicity of $f$ with respect to each coordinate, we can rewrite \begin{equation*}
\begin{split}
D^{\boldsymbol{\alpha}}f(\bold x)-D^{\boldsymbol{\alpha}}\bold{C}_{\Psi_{\bold c,\bold h}}(f)(\bold x)&=D^{\boldsymbol{\alpha}}f(\bold x)-\bold{C}_{D^{\boldsymbol{\alpha}}\Psi_{\bold c,\bold h}}(f)(\bold x)\\
&=f^{(\boldsymbol{\alpha})}(\bold x)-\int_{\mathbb{T}^d}f(\bold t)D^{\boldsymbol{\alpha}}\Psi_{\bold c,\bold h}(\bold x-\bold t)d\bold t\\
&=f^{(\boldsymbol{\alpha})}(\bold x)-\int_{\mathbb{T}^d}f^{(\boldsymbol{\alpha})}(\bold t)\Psi_{\bold c,\bold h}(\bold x-\bold t)d\bold t
\end{split}
\end{equation*} using $|\boldsymbol{\alpha}|$ times integration by parts. In addition, observing that $\Psi_{c_k,h_k}$ is a trigonometric discretization of $\Psi_{c_k}$ with a step size $h_k$ \cite{GaoandWu2}, we can express $\Psi_{c_k,h_k}$  in terms of $\Psi_{c_k}$ as $$\Psi_{c_k,h_k}=\Psi_{c_k}+\bar{B_k}h_k^2\phi^{(4)}_{c_k}(\eta_k),\ \eta_k\in (x_k-h_k,x_k+h_k),$$ for some constant $\bar{B_k}$.  This together with Lemma \ref{integralvalues} leads to
\begin{equation*}
\begin{split}
&|D^{\boldsymbol{\alpha}}f(\bold x)-D^{\boldsymbol{\alpha}}\bold{C}_{\Psi_{\bold c,\bold h}}(f)(\bold x)|\\
\leq &\Bigg|\int_{\mathbb{T}^d}(f^{(\boldsymbol{\alpha})}(\bold x)-f^{(\boldsymbol{\alpha})}(\bold t))\Psi_{\bold c,\bold h}(\bold x-\bold t)d\bold t\Bigg|+C_1\sum_{k=1}^d(c_k^2+h_k^2)|\ln c_k|\\
=&\Bigg|\int_{\mathbb{T}^d}(f^{(\boldsymbol{\alpha})}(\bold x)-f^{(\boldsymbol{\alpha})}(\bold t))\prod_{k=1}^d\Bigg[\Psi_{c_k}(x_k-t_k)+\bar{B_k}h_k^2\phi_{c_k}^{(4)}(\eta_k-t_k)\Bigg]d\bold t\Bigg|+C_1\sum_{k=1}^d(c_k^2+h_k^2)|\ln c_k|.
\end{split}
\end{equation*}
Thus it suffices to show that the inequality
\begin{equation}\label{induction}
\Bigg|\int_{\mathbb{T}^d}(f^{(\boldsymbol{\alpha})}(\bold x)-f^{(\boldsymbol{\alpha})}(\bold t))\prod_{k=1}^d\Bigg[\Psi_{c_k}(x_k-t_k)+\bar{B_k}h_k^2\phi_{c_k}^{(4)}(\eta_k-t_k)\Bigg]d\bold t\Bigg|\leq \bar{C}\sum_{k=1}^d(c_k^2+h_k^2)|\ln c_k|
 \end{equation} holds true uniformly for any $x_k\in \mathbb{T}$ and $\eta_k\in (x_k-h_k, x_k+h_k)$. Here $\bar{C}$ is some positive constant.

We adopt  mathematical induction with respect to the dimension $d$ and start with the univariate case.

 For $d=1$, it is easy to get the triangle inequality
\begin{equation*}
\begin{split}&\Bigg|\int_{\mathbb{T}}(f^{(\alpha_1)}(x_1)-f^{(\alpha_1)}(t_1))\Bigg[\Psi_{c_1}(x_1-t_1)+\bar{B_1}h_1^2\phi_{c_1}^{(4)}(\eta_1-t_1)\Bigg]dt_1\Bigg|
\\\leq& \Bigg|\int_{\mathbb{T}}(f^{(\alpha_1)}(x_1)-f^{(\alpha_1)}(t_1))\Psi_{c_1}(x_1-t_1)dt_1\Bigg|+\bar{B_1}h_1^2\Bigg|\int_{\mathbb{T}}(f^{(\alpha_1)}(x_1)-f^{(\alpha_1)}(t_1))\phi_{c_1}^{(4)}(\eta_1-t_1)dt_1\Bigg|.
\end{split}
\end{equation*}
Moreover, according to Equation \eqref{con} of Lemma \ref{convolution}, the first part of the right-hand side of the above inequality can be bounded by:
$$\Bigg|\int_{\mathbb{T}}(f^{(\alpha_1)}(x_1)-f^{(\alpha_1)}(t_1))\Psi_{c_1}(x_1-t_1)dt_1\Bigg|\leq C_0 c_1^2(c_1^2+1)|\ln c_1|.$$
To get the bound of the second part, we first get
\begin{equation*}
\begin{split}
\int_{\mathbb{T}}f^{(\alpha_1)}(x_1)\phi_{c_1}^{(4)}(\eta_1-t_1)dt_1&=f^{(\alpha_1)}(x_1)\int_{\mathbb{T}}\phi_{c_1}^{(4)}(\eta_1-t_1)dt_1\\
&=f^{(\alpha_1)}(x_1)\Big( \phi_{c_1}^{(3)}(\eta_1-1)-\phi_{c_1}^{(3)}(\eta_1)\Big)=0,
\end{split}
\end{equation*} due to the $1$-periodicity of $\phi_{c_1}$. Then,  applying the integration by parts twice to the definite integral $\int_{\mathbb{T}}f^{(\alpha_1)}(t_1)\phi_{c_1}^{(4)}(\eta_1-t_1)dt_1$,
we have
\begin{equation*}
\Bigg|\int_{\mathbb{T}}f^{(\alpha_1)}(t_1)\phi_{c_1}^{(4)}(\eta_1-t_1)dt_1\Bigg|\\
\leq \|f^{(2+\alpha_1)}\|_{\infty} \int_{\mathbb{T}}|\phi_{c_1}^{(2)}(\eta_1-t_1)|dt_1.
\end{equation*} This together with Inequality \eqref{eq:lem_highorderofphi3} leads to 
\begin{equation*}
\Bigg|\int_{\mathbb{T}}f^{(\alpha_1)}(t_1)\phi_{c_1}^{(4)}(\eta_1-t_1)dt_1\Bigg|\leq  \bar{C}^*|\ln c_1|, \quad \text{where} \quad  \bar{C}^*=(24+12\ln 2)\|f^{(2+\alpha_1)}\|_{\infty}.
\end{equation*}
 Consequently, we have
$$\bar{B_1}h_1^2\Bigg|\int_{\mathbb{T}}(f^{(\alpha_1)}(x_1)-f^{(\alpha_1)}(t_1))\phi_{c_1}^{(4)}(\eta_1-t_1)dt_1\Bigg|\leq \bar{B_1}h_1^2 \bar{C}^* |\ln c_1|.$$ 

Finally, combining the above two parts yields 
\begin{equation*}
    \begin{split}
        &\Bigg|\int_{\mathbb{T}}(f^{(\alpha_1)}(x_1)-f^{(\alpha_1)}(t_1))\Bigg(\Psi_{c_1}(x_1-t_1)+B_1h_1^2\phi_{c_1}^{(4)}(\eta_1-t_1)\Bigg)dt_1\Bigg|\\
        \leq& C_0c_1^2(1+c_1^2)|\ln c_1|+ \bar{C}^* \bar{B}_1h_1^2|\ln c_1|
        \leq 2C_0c_1^2|\ln c_1|+\bar{C}^* \bar{B}_1h_1^2|\ln c_1|\\
        \leq& (2C_0+ \bar{C}^* \bar{B}_1)(c_1^2+h_1^2)|\ln c_1|,
    \end{split}
\end{equation*} which is a special case of Inequality \eqref{induction} with $d=1$ and $\bar{C}= (2C_0+ \bar{C}^* \bar{B}_1)$.

Suppose that  Inequality \eqref{induction} holds true for  $d-1$ with any $d\geq 2$. Namely, $$\Bigg|\int_{\mathbb{T}^{d-1}}(f^{(\boldsymbol{\alpha}_{d-1})}(\bold x_{d-1})-f^{(\boldsymbol{\alpha}_{d-1})}(\bold t_{d-1}))\prod_{k=1}^{d-1}\Bigg[\Psi_{c_k}(x_k-t_k)+B_kh_k^2\phi_{c_k}^{(4)}(\eta_k-t_k)\Bigg]d\bold t_{d-1}\Bigg|\leq \bar{C}\sum_{k=1}^{d-1}(c_k^2+h_k^2)|\ln c_k|,$$ where $\bold v_{d-1}$ denotes restriction of a $d$-variate vector $\bold v$ to its $(d-1)$-dimensional space.
Then,  we have
 \begin{equation*}
 \begin{split}
 &\Bigg|\int_{\mathbb{T}^d}(f^{(\boldsymbol{\alpha})}(\bold x)-f^{(\boldsymbol{\alpha})}(\bold t))\prod_{k=1}^d\Bigg[\Psi_{c_k}(x_k-t_k)+B_kh_k^2\phi_{c_k}^{(4)}(\eta_k-t_k)\Bigg]d\bold t\Bigg|\\
 \leq& \Bigg|\int_{\mathbb{T}^d}\Bigg(f^{(\boldsymbol{\alpha})}(\bold x_{d-1},x_d)-f^{(\boldsymbol{\alpha})}(\bold t_{d-1},x_d)\Bigg)\prod_{k=1}^d\Bigg[\Psi_{c_k}(x_k-t_k)+B_kh_k^2\phi_{c_k}^{(4)}(\eta_k-t_k)\Bigg]d\bold t_{d-1}dt_d\Bigg|\\
 &+\Bigg|\int_{\mathbb{T}^d}\Bigg(f^{(\boldsymbol{\alpha})}(\bold t_{d-1},x_d)-f^{(\boldsymbol{\alpha})}(\bold t_{d-1},t_d)\Bigg)\prod_{k=1}^d\Bigg[\Psi_{c_k}(x_k-t_k)+B_kh_k^2\phi_{c_k}^{(4)}(\eta_k-t_k)\Bigg]dt_dd\bold t_{d-1}\Bigg| \\
 \leq& \bar{C}\sum_{k=1}^{d-1}(c_k^2+h_k^2)|\ln c_k|\cdot\Bigg|\int_{\mathbb{T}}\Bigg[\Psi_{c_d}(x_d-t_d)+B_dh_d^2\phi_{c_d}^{(4)}(\eta_d-t_d)\Bigg]dt_d\Bigg|\\
 &+\Bigg|\int_{\mathbb{T}^{d-1}}\int_{\mathbb{T}}\Bigg(f^{(\boldsymbol{\alpha})}(\bold t_{d-1},x_d)-f^{(\boldsymbol{\alpha})}(\bold t_{d-1},t_d)\Bigg)\prod_{k=1}^d\Bigg[\Psi_{c_k}(x_k-t_k)+B_kh_k^2\phi_{c_k}^{(4)}(\eta_k-t_k)\Bigg]dt_dd\bold t_{d-1}\Bigg|.  \end{split}
 \end{equation*}
 Moreover, according to Inequality \eqref{erortoone} of Lemma \ref{integralvalues}, we have
 $$\Bigg|\int_{\mathbb{T}}\Bigg[\Psi_{c_d}(x_d-t_d)+B_dh_d^2\phi_{c_d}^{(4)}(\eta_d-t_d)\Bigg]dt_d\Bigg|\leq 1+C_1(c_d^2+h_d^2)|\ln c_d|,$$ which in turn leads to
  \begin{equation}\label{firstpart}
  \begin{split}
  &\Bigg|\int_{\mathbb{T}^d}\Bigg(f^{(\boldsymbol{\alpha})}(\bold x_{d-1},x_d)-f^{(\boldsymbol{\alpha})}(\bold t_{d-1},x_d)\Bigg)\prod_{k=1}^d\Bigg[\Psi_{c_k}(x_k-t_k)+B_kh_k^2\phi_{c_k}^{(4)}(\eta_k-t_k)\Bigg]d\bold t_{d-1}dt_d\Bigg|\\
  \leq& \Bigg(\bar{C}\sum_{k=1}^{d-1}(c_k^2+h_k^2)|\ln c_k|\Bigg)\Bigg(1+C_1(c_d^2+h_d^2)|\ln c_d|\Bigg).
  \end{split}
  \end{equation}
  Furthermore, based on the above starting case of mathematical induction (that is $d=1$), we have
 $$\Bigg|\int_{\mathbb{T}}\Bigg(f^{(\boldsymbol{\alpha})}(\bold t_{d-1},x_d)-f^{(\boldsymbol{\alpha})}(\bold t_{d-1},t_d)\Bigg)\Bigg[\Psi_{c_d}(x_d-t_d)+B_dh_d^2\phi_{c_d}^{(4)}(\eta_d-t_d)\Bigg]dt_d\Bigg|\leq (2C_0+\bar{C}^*\bar{B}_1)(c_d^2+h_d^2)|\ln c_d|.$$ This together with  Inequality \eqref{erortoone} of Lemma \ref{integralvalues} leads to
 \begin{equation}\label{secondpart}
 \begin{split}
 &\Bigg|\int_{\mathbb{T}^{d-1}}\int_{\mathbb{T}}\Bigg(f^{(\boldsymbol{\alpha})}(\bold t_{d-1},x_d)-f^{(\boldsymbol{\alpha})}(\bold t_{d-1},t_d)\Bigg)\prod_{k=1}^d\Bigg[\Psi_{c_k}(x_k-t_k)+B_kh_k^2\phi_{c_k}^{(4)}(\eta_k-t_k)\Bigg]dt_dd\bold t_{d-1}\Bigg|\\
 \leq& (2C_0+\bar{C}^*\bar{B}_1)(c_d^2+h_d^2)|\ln c_d|\cdot\Bigg|\int_{\mathbb{T}^{d-1}}\prod_{k=1}^{d-1}\Bigg[\Psi_{c_k}(x_k-t_k)+B_kh_k^2\phi_{c_k}^{(4)}(\eta_k-t_k)\Bigg]d\bold t_{d-1}\Bigg|\\
 \leq& (2C_0+\bar{C}^*\bar{B}_1)(c_d^2+h_d^2)|\ln c_d|\Bigg(1+C_1\sum_{k=1}^{d-1}(c_k^2+h_k^2)|\ln c_k|\Bigg).
 \end{split}
 \end{equation}
 Finally, combining Inequality \eqref{firstpart} and Inequality \eqref{secondpart}, we have
 \begin{equation*}
 \begin{split}
 &\Bigg|\int_{\mathbb{T}^d}(f^{(\boldsymbol{\alpha})}(\bold x)-f^{(\boldsymbol{\alpha})}(\bold t))\prod_{k=1}^d\Bigg[\Psi_{c_k}(x_k-t_k)+B_kh_k^2\phi_{c_k}^{(4)}(\eta_k-t_k)\Bigg]d\bold t\Bigg|\\
 \leq& \Bigg|\int_{\mathbb{T}^d}\Bigg(f^{(\boldsymbol{\alpha})}(\bold x_{d-1},x_d)-f^{(\boldsymbol{\alpha})}(\bold t_{d-1},x_d)\Bigg)\prod_{k=1}^d\Bigg[\Psi_{c_k}(x_k-t_k)+B_kh_k^2\phi_{c_k}^{(4)}(\eta_k-t_k)\Bigg]d\bold t_{d-1}dt_d\Bigg|\\
 &+\Bigg|\int_{\mathbb{T}^d}\Bigg(f^{(\boldsymbol{\alpha})}(\bold t_{d-1},x_d)-f^{(\boldsymbol{\alpha})}(\bold t_{d-1},t_d)\Bigg)\prod_{k=1}^d\Bigg[\Psi_{c_k}(x_k-t_k)+B_kh_k^2\phi_{c_k}^{(4)}(\eta_k-t_k)\Bigg]dt_dd\bold t_{d-1}\Bigg| \\
 \leq& \Bigg(\bar{C}\sum_{k=1}^{d-1}(c_k^2+h_k^2)|\ln c_k|\Bigg)\Bigg(1+C_1(c_d^2+h_d^2)|\ln c_d|\Bigg)+(2C_0+\bar{C}^*\bar{B}_1)(c_d^2+h_d^2)|\ln c_d|\Bigg(1+C_1\sum_{k=1}^{d-1}(c_k^2+h_k^2)|\ln c_k|\Bigg)\\
 \leq& C\sum_{k=1}^{d}(c_k^2+h_k^2)|\ln c_k|,~~\text{where}~C=2\max\{\bar{C},2C_0+\bar{C}^*\bar{B}_1\}.\\
 \end{split}
 \end{equation*}
That is, Inequality \eqref{induction} holds true for any $d$.
 \end{proof}
The above theorem provides a theoretical foundation of employing the convolution operator $\bold{C}_{\Psi_{\bold c,\bold h}}(f)$ and its derivatives to approximate both the function $f$ and its derivatives. However, in practical applications, we can only have  discrete function values $f|_{\bold X}$ evaluated at corresponding sampling centers $\bold X$. Thus we  have to adopt a discrete version of $\bold{C}_{\Psi_{\bold c,\bold h}}(f)$ as: \begin{equation}\label{general}
Q_{\bold X}f(\bold x)=\sum_{\mathbf{x_{\mathbf{i},\mathbf{j}}}\in \bold X}f(\mathbf{x_{\mathbf{i},\mathbf{j}}})\Psi_{\mathbf{c},\mathbf{h}}(\bold x-\mathbf{x_{\mathbf{i},\mathbf{j}}})\mathbf{\mu_{\mathbf{i},\mathbf{j}}},\ \bold x \in \mathbb{T}^d,
\end{equation} where $\{\mathbf{\mu_{\mathbf{i},\mathbf{j}}}\}$ denote  quadrature weights such that $Q_{\bold X}f(\bold x)$ is a  quadrature rule of the convolution operator $\bold{C}_{\Psi_{\mathbf{c},\mathbf{h}}}(f)(\bold x)$ for any fixed point $\bold x\in \mathbb{T}^d$. Further, we  can  decompose the approximation error $Q_{\bold X}f(\bold x)-f(\bold x)$ as a sum of two parts:
$$Q_{\bold X}f(\bold x)-f(\bold x)=\bold{C}_{\Psi_{\mathbf{c},\mathbf{h}}}(f)(\bold x)-f(\bold x)+Q_{\bold X}f(\bold x)-\bold{C}_{\Psi_{\mathbf{c},\mathbf{h}}}(f)(\bold x).$$ The first part is the convolution error of employing the convolution operator $\bold{C}_{\Psi_{\mathbf{c},\mathbf{h}}}$ to approximate $f$, while the second part is  the discretization error of using quadrature rule $Q_{\bold X}f$ to discretize  the convolution integral $\bold{C}_{\Psi_{\mathbf{c},\mathbf{h}}}(f)$.   Moreover, observing that the convolution error has been derived in Theorem \ref{convolutionerror}, we only need to derive the bound of discretization error $Q_{\bold X}f(\bold x)-\bold{C}_{\Psi_{\mathbf{c},\mathbf{h}}}(f)(\bold x)$. We remind readers that the discretization error depends heavily on quadrature rules \cite{Sloan}. Different quadrature rules lead to different quasi-interpolation schemes and corresponding error estimates, for example, Monte Carlo quasi-interpolation \cite{GaoandWu5}, Quasi-Monte Carlo quasi-interpolation \cite{GaoandWu6}. In the followings,  we shall take  the sparse grid  quadrature rule leading to  sparse grid quasi-interpolation.

 We adopt  notations provided in Subsection $2.4$. 
 Let $\{\mathbf{t_{\mathbf{l},\mathbf{j}}}=(t_{l_{1},j_{1}},t_{l_{2},j_{2}},\cdots ,t_{l_{d},j_{d}})\}$ be grid points of the sparse grid $\mathbb{W}_{n,d}$ over $\mathbb{T}^d$, where $t_{l_{k},j_{k}}=j_{k}2^{-l_{k}}$ for $j_{k}\in \{0,1,\cdots ,2^{l_{k}}\}$. Suppose that we have discrete function values  $\{f(\mathbf{t_{\mathbf{l},\mathbf{j}}})\}$ evaluated at   $\{\mathbf{t_{\mathbf{l},\mathbf{j}}}\}$. 
 Then, by discretizing the convolution integral $\bold{C}_{\Psi_{\bold c, \bold h}}(f)$ on the sparse grid, we get a sparse grid quasi-interpolant  
\begin{equation}\label{sparsequasi}
Q_{n,d}f(\bold x)=\sum_{\mathbf{t_{\mathbf{l},\mathbf{j}}}\in W_{n,d}}f(\mathbf{t_{\mathbf{l},\mathbf{j}}})\Psi_{\mathbf{c},\mathbf{h}}^*(\bold x-\mathbf{t_{\mathbf{l},\mathbf{j}}}),
\end{equation}
where $$\Psi_{\mathbf{c},\mathbf{h}}^*(\bold x)=\prod_{k=1}^d\Psi_{c_k,h_k}(x_k)\sin 2\pi h_k,\ h_k=2^{-l_{k}}.$$
It is straightforward to see that  Equation \eqref{sparsequasi} is a special case of  Equation \eqref{general} with sampling centers $\bold X=\mathbb{W}_{n,d}$ and quadrature weights $$\bold \mu_{\mathbf{l},\mathbf{j}}=\prod_{k=1}^d\sin 2\pi h_k.$$
 Moreover, due to the algebraic sum property of sparse grid $\mathbb{W}_{n,d}$, $Q_{n,d}f$ can be further represented as an algebraic sum of subgrid quasi-interpolants (Jeong, Kersey, and Yoon \cite{Jeong}, Usta and Levesley \cite{Usta}). More precisely, we have
\begin{equation}\label{sparsequasitele}
Q_{n,d}f(\bold x)=(-1)^{d-1}\sum_{k=0}^{d-1}(-1)^{k}\binom{d-1}{k}\sum_{|\mathbf{l}|=n+k}Q_{\mathbf{l}}f(\bold x)
\end{equation}
with
\begin{equation}\label{subgridquasi}
Q_{\mathbf{l}}f(\bold x)=\sum_{\mathbf{t_{\mathbf{l},\mathbf{j}}}\in \mathbb{W}_{\mathbf{l}}}f(\mathbf{t_{\mathbf{l},\mathbf{j}}})\Psi_{\mathbf{c},\mathbf{h}}^*(\bold x-\mathbf{t_{\mathbf{l},\mathbf{j}}}).
\end{equation}
 This may be the reason why sparse grid related approximations are also called as discrete blending or Boolean approximations \cite{BungartzandGriebel}.
In particular, for $d=2, 3$, we have \cite{Jeong}
$$Q_{n,2}f(\mathbf{x})=\sum_{\left |\mathbf{l}  \right |=n+1}Q_{\mathbf{l}}f(\mathbf{x})-\sum_{\left |\mathbf{l}  \right |=n}Q_{\mathbf{l}}f(\mathbf{x}),$$
$$Q_{n,3}f(\mathbf{x})=\sum_{\left |\mathbf{l}  \right |=n+2}Q_{\mathbf{l}}f(\mathbf{x})-2\sum_{\left |\mathbf{l}  \right |=n+1}Q_{\mathbf{l}}f(\mathbf{x})+\sum_{\left |\mathbf{l}  \right |=n}Q_{\mathbf{l}}f(\mathbf{x}).$$

  We now derive simultaneous error estimates of  $Q_{n,d}f$.
Based on  above discussions, we only need to  bound the discretization error $D^{\boldsymbol{\alpha}}Q_{n,d}f(\bold x)-D^{\boldsymbol{\alpha}}\bold C_{\Psi_{\bold c, \bold h}}(f)(\bold x)$ for each fixed point $\bold x\in \mathbb{T}^d$. To this end, we first represent it as
\begin{equation}\label{sparsecase}
\begin{split}
D^{\boldsymbol{\alpha}}Q_{n,d}f(\bold x)-D^{\boldsymbol{\alpha}}\bold C_{\Psi_{\bold c, \bold h}}(f)(\bold x)&=D^{\boldsymbol{\alpha}}Q_{n,d}f(\bold x)-\bold C_{D^{\boldsymbol{\alpha}}\Psi_{\bold c, \bold h}}(f)(\bold x)\\
&=\sum_{\mathbf{t_{\mathbf{l},\mathbf{j}}}\in \mathbb{W}_{n,d}}f(\mathbf{t_{\mathbf{l},\mathbf{j}}})D^{\boldsymbol{\alpha}}\Psi^*_{\bold c, \bold h}(\bold x-\mathbf{t_{\mathbf{l},\mathbf{j}}})-\int_{\mathbb{T}^d}f(\bold t)D^{\boldsymbol{\alpha}}\Psi_{\bold c, \bold h}(\bold x-\bold t)d\bold t\\
&=(-1)^{d-1}\sum_{l=0}^{d-1}(-1)^{l}\binom{d-1}{l}\sum_{|\mathbf{l}|=n+l}\Bigg(D^{\boldsymbol{\alpha}}Q_{\mathbf{l}}f(\bold x)-\int_{\mathbb{T}^d}f(\bold t)D^{\boldsymbol{\alpha}}\Psi_{\bold c, \bold h}(\bold x-\bold t)d\bold t\Bigg)
\end{split}
\end{equation}
using  the algebraic sum property \eqref{sparsequasitele} of $Q_{n,d}f$ and the identity $$(-1)^{d-1}\sum_{l=0}^{d-1}(-1)^{l}\binom{d-1}{l}\sum_{|\mathbf{l}|=n+l}1\equiv1.$$ Then we go further with deriving the bound of  $$D^{\boldsymbol{\alpha}}Q_{\bold l}f(\bold x)-\int_{\mathbb{T}^d}f(\bold t)D^{\boldsymbol{\alpha}}\Psi_{\bold c, \bold h}(\bold x-\bold t)d\bold t,$$ which is an error of tensor product  quadrature formula with a directionally uniform stepsize $\bold h=(2^{-l_1},\cdots, 2^{-l_d})$.  There are many techniques to derive  error bounds of multivariate tensor product problems, see  Gerstner and Griebel \cite{Gerstner}, Wasilkowski and Wo$\acute{z}$niakowski \cite{Wozniakowski}, for instance. In particular,  for any periodic function $f\in W_{\infty}^{r+2}(\mathbb{T}^d)$, we have
\begin{equation}\label{singlelevel}
\begin{split}
&\Bigg|D^{\boldsymbol{\alpha}}Q_{\bold l}f(\bold x)-\int_{\mathbb{T}^d}f(\bold t)D^{\boldsymbol{\alpha}}\Psi_{\bold c, \bold h}(\bold x-\bold t)d\bold t\Bigg|
\leq
C\sum_{j=1}^dh_j^{r+2}\Bigg|\int_{\mathbb{T}}D^{r+2+\alpha_j}\Psi_{c_j, h_j}(x_j-t_j)dt_j\Bigg|\\
\leq& 4\pi^2\frac{1-\cos(\pi h_j)}{\sin(2\pi h_j) \sin(\pi h_j)}C\sum_{j=1}^dh_j^{r+2}\Bigg|\int_{\mathbb{T}}\phi_{c_j}^{(r+2+\alpha_j)}(x_j-t_j)dt_j\Bigg|\\
\leq& C^*\sum_{j=1}^dh_j^{r+2}\left(\int_{|\sin\pi(\eta_j-t_j)|\leq c_j}c_j^{-r-1-\alpha_j}dt_j+\int_{|\sin\pi(\eta_j-t_j)|\geq c_j}\frac{1}{|\sin\pi(\eta_j-t_j)|^{r+1+\alpha_j}}dt_j\right)\\
\leq& C^*\sum_{j=1}^dh_j^{r+2}\left(c_j^{-r-\alpha_j}+c_j^{-r-\alpha_j}\int_{|\sin\pi(\eta_j-t_j)|\geq c_j}\frac{1}{|\sin\pi(\eta_j-t_j)|}dt_j\right)\leq \bar{C}^*\sum_{j=1}^dh_j^{r+2}c_j^{-r-\alpha_j}|\ln c_j|,
\end{split}
\end{equation}based on Inequalities \eqref{eq:lem_highorderofphi3}-\eqref{bound}. 
Finally, combining Equation \eqref{sparsecase} and Inequality \eqref{singlelevel}, we can obtain the following error bound:
\begin{equation*}
\begin{split}
&\Bigg|\sum_{\mathbf{t_{\mathbf{l},\mathbf{j}}}\in \mathbb{W}_{n,d}}f(\mathbf{t_{\mathbf{l},\mathbf{j}}})D^{\boldsymbol{\alpha}}\Psi_{\bold c, \bold h}^*(\bold x-\mathbf{t_{\mathbf{l},\mathbf{j}}})-\int_{\mathbb{T}^d}f(\bold t)D^{\boldsymbol{\alpha}}\Psi_{\bold c, \bold h}(\bold x-\bold t)d\bold t\Bigg|\\
\leq& \sum_{l=0}^{d-1}\binom{d-1}{l}\sum_{|\mathbf{l}|=n+l}\Bigg|D^{\boldsymbol{\alpha}}Q_{\mathbf{l}}f(\bold x)-\int_{\mathbb{T}^d}f(\bold t)D^{\boldsymbol{\alpha}}\Psi_{\bold c, \bold h}(\bold x-\bold t)d\bold t\Bigg|\\
\leq& \bar{C}^*\sum_{l=0}^{d-1}\binom{d-1}{l}\sum_{|\mathbf{l}|=n+l}\sum_{j=1}^dh_j^{r+2}c_j^{-r-\alpha_j}|\ln c_j|\leq C_3\sum_{j=1}^dh_j^{r+2}c_j^{-r-\alpha_j}|\ln c_j|.
\end{split}
\end{equation*}
We conclude the above discussions into the following theorem.
\begin{theorem}\label{discretizationerror1}
Let $Q_{n,d}f$ and $C_{\Psi_{\bold c, \bold h}}(f)$ be defined as above. Then, for any $0\leq \|\boldsymbol{\alpha}\|_{\infty}\leq r$ and any $f\in W_{\infty}^{r+2}(\mathbb{T}^d)$, we have the error estimate
$$\|D^{\boldsymbol{\alpha}}Q_{n,d}f-D^{\boldsymbol{\alpha}}C_{\Psi_{\bold c, \bold h}}(f)\|_{\infty}\leq C_{3}\sum_{j=1}^dh_j^{r+2}c_j^{-r-\alpha_j}|\ln c_j|.$$
\end{theorem}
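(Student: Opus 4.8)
The plan is to bound the discretization error $D^{\boldsymbol{\alpha}}Q_{n,d}f-D^{\boldsymbol{\alpha}}C_{\Psi_{\bold c,\bold h}}(f)$ by reducing it, through the Boolean (algebraic) sum structure of the sparse grid, to a single \emph{uniform} bound on the full-grid (single-level) tensor-product quadrature error, and then controlling that error via the high-order derivative estimates for $\phi_c$ in Lemma \ref{highorderofphi}.

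First I would exploit the representation \eqref{sparsequasitele} of $Q_{n,d}f$ as a signed combination of full-grid quasi-interpolants $Q_{\bold l}f$ over the level sets $|\mathbf{l}|=n+l$, together with the combinatorial identity $(-1)^{d-1}\sum_{l=0}^{d-1}(-1)^{l}\binom{d-1}{l}\sum_{|\mathbf{l}|=n+l}1\equiv1$. Inserting the same signed combination into the convolution integral, which the identity reproduces exactly, gives the decomposition \eqref{sparsecase}, so that the global discretization error becomes a signed sum, over at most a $d$-dependent number of terms, of the single-level errors $D^{\boldsymbol{\alpha}}Q_{\bold l}f(\bold x)-\int_{\mathbb{T}^d}f(\bold t)D^{\boldsymbol{\alpha}}\Psi_{\bold c,\bold h}(\bold x-\bold t)\,d\bold t$. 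The triangle inequality then reduces the whole theorem to a bound on one such single-level error that is uniform in $\bold x\in\mathbb{T}^d$ and in the level multi-index $\mathbf{l}$.

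Next I would analyze the single-level error, which is precisely the error of the tensor-product rectangle-type quadrature (weights $\prod_k\sin 2\pi h_k$, nodes $\mathbf{t_{\mathbf{l},\mathbf{j}}}$, stepsizes $h_k=2^{-l_k}$) applied to the periodic integrand $f(\bold t)D^{\boldsymbol{\alpha}}\Psi_{\bold c,\bold h}(\bold x-\bold t)$. Using a standard tensor-product quadrature error decomposition for finitely smooth periodic integrands (as in Gerstner--Griebel \cite{Gerstner} or Wasilkowski--Wo\'{z}niakowski \cite{Wozniakowski}), the error splits into a sum over coordinate directions $j$, each contributing a factor $h_j^{r+2}$ times the $(r+2)$-th directional derivative of the integrand. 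Transferring all derivatives onto the kernel by integration by parts and periodicity, exactly as in the proof of Theorem \ref{convolutionerror}, and using that $f\in W_{\infty}^{r+2}(\mathbb{T}^d)$ keeps the $f$-factors and their derivatives uniformly bounded, the dominant directional term becomes $h_j^{r+2}\int_{\mathbb{T}}|D^{r+2+\alpha_j}\Psi_{c_j,h_j}|$, which is the content of \eqref{singlelevel}. Since $\Psi_{c_j,h_j}$ is the second-order trigonometric divided difference of $\phi_{c_j}$ with stepsize $h_j$, this integral equals $4\pi^2\frac{1-\cos\pi h_j}{\sin 2\pi h_j\,\sin\pi h_j}\int_{\mathbb{T}}|\phi_{c_j}^{(r+2+\alpha_j)}|$, where the normalizing prefactor stays $O(1)$ as $h_j\to0$.

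The main work is then the derivative integral $\int_{\mathbb{T}}|\phi_{c_j}^{(r+2+\alpha_j)}|$, which I would bound by splitting $\mathbb{T}$ into the region $\{\sin\pi x\le c_j\}$ and its complement and applying Lemma \ref{highorderofphi}. On the first region, of measure $O(c_j)$, estimate \eqref{eq:lem_highorderofphi1} with $\ell=r+2+\alpha_j$ contributes $O(c_j\cdot c_j^{-r-1-\alpha_j})=O(c_j^{-r-\alpha_j})$; on the complement, estimate \eqref{eq:lem_highorderofphi2} together with the elementary $1/\sin$ bound \eqref{bound} supplies the logarithmic factor, contributing $O(c_j^{-r-\alpha_j}|\ln c_j|)$. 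Combining gives the single-level bound $\bar C^{*}\sum_j h_j^{r+2}c_j^{-r-\alpha_j}|\ln c_j|$. Finally I would reassemble via the Boolean sum: because this single-level bound is uniform and the number of subgrids in each level set together with the binomial weights is bounded by a constant depending only on $d$, summing over \eqref{sparsecase} preserves the form and yields $\|D^{\boldsymbol{\alpha}}Q_{n,d}f-D^{\boldsymbol{\alpha}}C_{\Psi_{\bold c,\bold h}}(f)\|_{\infty}\le C_3\sum_j h_j^{r+2}c_j^{-r-\alpha_j}|\ln c_j|$. The hardest part is the single-level step: one must arrange the tensor-product quadrature error so that all $r+2$ derivatives land on the kernel, whose high derivatives blow up like $c_j^{1-\ell}$ near $\sin\pi x=0$, while retaining the compensating $h_j^{r+2}$ factor, and then control this blow-up \emph{uniformly across all levels and subgrids} through Lemma \ref{highorderofphi}.
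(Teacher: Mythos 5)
Your proposal is correct and follows essentially the same route as the paper: the Boolean-sum decomposition \eqref{sparsequasitele} with the combinatorial identity to reduce to single-level errors (the paper's \eqref{sparsecase}), the tensor-product quadrature bound placing $h_j^{r+2}$ against $\int_{\mathbb{T}}|D^{r+2+\alpha_j}\Psi_{c_j,h_j}|$ (the paper's \eqref{singlelevel}), and the splitting of $\mathbb{T}$ into $\{\sin\pi x\le c_j\}$ and its complement using Lemma \ref{highorderofphi} and \eqref{bound} to extract $c_j^{-r-\alpha_j}|\ln c_j|$. Nothing essential differs, so no further comparison is needed.
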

Consequently,  combining Theorem \ref{convolutionerror} and Theorem \ref{discretizationerror1}, we can derive  simultaneous approximation orders of $Q_{n,d}f$ to $f$ as follows.
\begin{theorem}\label{maintheorem}
Let $Q_{n,d}f$ be the sparse grid quasi-interpolant defined as in Equation \eqref{sparsequasi}. Then for any $0\leq \|\boldsymbol{\alpha}\|_{\infty}\leq r$ and any $f\in W_{\infty}^{r+2}(\mathbb{T}^d)$, we have
\begin{equation}\label{finaleror}
\|D^{\boldsymbol{\alpha}}Q_{n,d}f-D^{\boldsymbol{\alpha}}f\|_{\infty}=\mathcal{O}\Bigg(\sum_{j=1}^d(c_j^2+h_j^2)|\ln c_j|\Bigg)+\mathcal{O}\Bigg(\sum_{j=1}^dh_j^{r+2}c_j^{-r-\alpha_j}|\ln c_j|\Bigg).
\end{equation}
Moreover, we can choose directionally optimal shape parameters $c_j=\mathcal{O}(h_j^{(r+2)/(r+2+\alpha_j)})$, $j=1,2,\cdots,d$, such that Equation \eqref{finaleror} becomes
\begin{equation}\label{finalerorop}
\|D^{\boldsymbol{\alpha}}Q_{n,d}f-D^{\boldsymbol{\alpha}}f\|_{\infty}=\mathcal{O}(\sum_{j=1}^dh_j^{(2r+4)/(r+2+\|\boldsymbol{\alpha}\|_{\infty})}|\ln h_j|).
\end{equation}
\end{theorem}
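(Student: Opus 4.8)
The plan is to combine the two error estimates already in hand---the convolution error of Theorem \ref{convolutionerror} and the discretization error of Theorem \ref{discretizationerror1}---through the triangle inequality, and then to choose the free shape parameters $\bold c$ so as to balance the two contributions against each other. Concretely, for any multi-index $\boldsymbol{\alpha}$ with $0\le\|\boldsymbol{\alpha}\|_\infty\le r$, I would start from the splitting
\begin{equation*}
D^{\boldsymbol{\alpha}}Q_{n,d}f-D^{\boldsymbol{\alpha}}f=\big(D^{\boldsymbol{\alpha}}\bold{C}_{\Psi_{\bold c,\bold h}}(f)-D^{\boldsymbol{\alpha}}f\big)+\big(D^{\boldsymbol{\alpha}}Q_{n,d}f-D^{\boldsymbol{\alpha}}\bold{C}_{\Psi_{\bold c,\bold h}}(f)\big),
\end{equation*}
take the $L_\infty$-norm, and bound the first bracket by Theorem \ref{convolutionerror} and the second by Theorem \ref{discretizationerror1}. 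This immediately yields
\begin{equation*}
\|D^{\boldsymbol{\alpha}}Q_{n,d}f-D^{\boldsymbol{\alpha}}f\|_\infty\le C_2\sum_{k=1}^d(c_k^2+h_k^2)|\ln c_k|+C_3\sum_{j=1}^dh_j^{r+2}c_j^{-r-\alpha_j}|\ln c_j|,
\end{equation*}
which is exactly the asserted relation \eqref{finaleror}.

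The second step is to optimize $\bold c$. Since both sums split across coordinates and each $c_j$ enters only the $j$-th summand (the logarithm $|\ln c_j|$ being a common factor of the two competing monomials), I would minimize coordinatewise the quantity $c_j^2|\ln c_j|+h_j^{r+2}c_j^{-r-\alpha_j}|\ln c_j|$. Balancing the two monomials, $c_j^2=h_j^{r+2}c_j^{-r-\alpha_j}$, forces $c_j^{\,r+2+\alpha_j}=h_j^{r+2}$, i.e. the stated directional choice $c_j=\mathcal{O}\big(h_j^{(r+2)/(r+2+\alpha_j)}\big)$, which indeed minimizes the sum of the two monomials up to a constant.

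With this choice I would verify the resulting order. The convolution part gives $c_j^2=\mathcal{O}\big(h_j^{2(r+2)/(r+2+\alpha_j)}\big)$, which dominates $h_j^2$ because $\alpha_j\ge0$ makes the exponent at most $2$ and $h_j<1$; the discretization part has exponent
\begin{equation*}
(r+2)-(r+\alpha_j)\frac{r+2}{r+2+\alpha_j}=\frac{2(r+2)}{r+2+\alpha_j},
\end{equation*}
so it attains the same order. Using $|\ln c_j|=\tfrac{r+2}{r+2+\alpha_j}|\ln h_j|=\mathcal{O}(|\ln h_j|)$, each coordinate contributes $\mathcal{O}\big(h_j^{2(r+2)/(r+2+\alpha_j)}|\ln h_j|\big)$. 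Finally, to reach the uniform bound \eqref{finalerorop} I would relax the directional exponent to the worst case: since $\alpha_j\le\|\boldsymbol{\alpha}\|_\infty$ and $h_j<1$, one has $h_j^{2(r+2)/(r+2+\alpha_j)}\le h_j^{(2r+4)/(r+2+\|\boldsymbol{\alpha}\|_\infty)}$, and summing over $j$ gives the claim. The only step demanding genuine care is this exponent bookkeeping---checking that the discretization exponent collapses exactly to $2(r+2)/(r+2+\alpha_j)$, that the $h_j^2$ term and the logarithms are absorbed, and that the passage from $\alpha_j$ to $\|\boldsymbol{\alpha}\|_\infty$ only weakens the rate---everything else being a mechanical use of the triangle inequality and the two preceding theorems.
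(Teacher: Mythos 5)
Your proposal is correct and follows essentially the same route as the paper: the paper likewise obtains \eqref{finaleror} by combining Theorem \ref{convolutionerror} and Theorem \ref{discretizationerror1} via the triangle inequality, and then asserts the directional balancing $c_j^{\,r+2+\alpha_j}=h_j^{r+2}$ to get \eqref{finalerorop}. Your explicit exponent bookkeeping (the collapse to $2(r+2)/(r+2+\alpha_j)$, absorption of the $h_j^2$ term and logarithms, and the relaxation from $\alpha_j$ to $\|\boldsymbol{\alpha}\|_{\infty}$) is exactly the verification the paper leaves implicit.
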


Note that our sparse grid quasi-interpolant $Q_{n,d}f$ is a tensor-product of univariate MQ trigonometric B-spline quasi-interpolant \cite{GaoandWu2} that preserves trigonometric convexity \cite{Koch} of target function. We can derive a corresponding shape-preserving property of $Q_{n,d}f$  as follows. 
\begin{lemma}\label{shapepreser}
If $f$ is a nonnegative periodic function, then $Q_{n,d}f$ is also a nonnegative periodic function. Moreover, if $f$ is a   trigonometric convex function with respect to  coordinate $x_k$ (that is $(D_{x_k}^2+\pi^2I)f(\bold x)\geq 0$), then  $Q_{n,d}f$ is also a trigonometric convex function with respect to $x_k$.
\end{lemma}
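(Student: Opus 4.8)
My plan is to exploit the tensor-product form \eqref{tensorkernel} of the kernel together with the Boolean-sum representation \eqref{sparsequasitele} of $Q_{n,d}$, thereby reducing both assertions to a single coordinate. Everything rests on one pointwise fact: $\Psi_c=(D_x^2+\pi^2 I)\phi_c\geq 0$. With $\phi_c(x)=\frac{1}{2\pi}\sqrt{c^2+\sin^2\pi x}$ from \eqref{multiquadric}, a direct differentiation yields the closed form $\Psi_c(x)=\frac{\pi c^2(1+c^2)}{2(c^2+\sin^2\pi x)^{3/2}}\geq 0$, so $\phi_c$ is trigonometrically convex. I would record this formula first, since both halves of the lemma hinge on it.

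First I would settle the univariate statement, working with two equivalent forms of the building block $\Psi_{c,h}^*(x)=\Psi_{c,h}(x)\sin(2\pi h)=2\pi^2\frac{\phi_c(x+h)-2\cos(\pi h)\phi_c(x)+\phi_c(x-h)}{\sin(\pi h)}$ from \eqref{newpsi}. For nonnegativity I use this form directly: the numerator is the second-order trigonometric divided difference of the trigonometrically convex $\phi_c$ (equivalently, the convolution of $\Psi_c\geq 0$ against the nonnegative second-order trigonometric B-spline), while $\sin(\pi h)>0$; hence $\Psi_{c,h}^*\geq 0$, and summing the nonnegative samples $f(t_j)\geq 0$ against these nonnegative kernels keeps the univariate quasi-interpolant nonnegative. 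For trigonometric convexity I instead apply summation by parts: using $x-t_j\pm h=x-t_{j\mp 1}$ on the periodic uniform grid I reindex to obtain $\mathcal{Q}g(x)=\frac{2\pi^2}{\sin(\pi h)}\sum_j(\Delta_j g)\,\phi_c(x-t_j)$, where $\Delta_j g=g(t_{j-1})-2\cos(\pi h)g(t_j)+g(t_{j+1})$ is the second trigonometric difference of the data; a Peano-kernel argument shows $\Delta_j g\geq 0$ whenever $(D_x^2+\pi^2 I)g\geq 0$. Applying $(D_x^2+\pi^2 I)$ now lands only on $\phi_c$, producing the factor $\Psi_c\geq 0$, so $(D_x^2+\pi^2 I)\mathcal{Q}g\geq 0$.

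These two facts lift verbatim to each subgrid operator $Q_{\mathbf{l}}$ of \eqref{subgridquasi}, which is a tensor product of univariate operators. Nonnegativity follows from $\Psi_{\mathbf{c},\mathbf{h}}^*=\prod_k\Psi_{c_k,h_k}^*\geq 0$, and applying $(D_{x_k}^2+\pi^2 I)$ to the $k$-th factor (after the per-line summation by parts, which is legitimate because $j_k$ runs over a full period inside $\mathbb{W}_{\mathbf{l}}$) leaves the remaining factors $\Psi_{c_m,h_m}^*\geq 0$ multiplied by $\Psi_{c_k}\geq 0$ and by the nonnegative differences in the $x_k$ data; hence each $Q_{\mathbf{l}}f$ is nonnegative, respectively trigonometrically convex in $x_k$.

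The main obstacle is the final assembly: $Q_{n,d}$ is the \emph{signed} Boolean combination \eqref{sparsequasitele} of the $Q_{\mathbf{l}}$, so positivity of each summand does not pass to the combination for free, and the node-sum shorthand \eqref{sparsequasi} cannot be read as a single positive sum because a shared sparse-grid node lies in several subgrids at incompatible levels $\mathbf{h}=2^{-\mathbf{l}}$. I would attack this by induction on $d$ through the recursive Boolean-sum structure of the combination technique: the Boolean sum $P\oplus R=P+R-PR$ of two positivity-preserving operators remains positivity-preserving once one verifies the comparison $0\leq PRf\leq Rf$ for nonnegative $f$, which here reduces to a partition-of-unity/contractivity estimate for the univariate operator supplied by \eqref{erortoone}. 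Pinning down this comparison, together with its trigonometric-convexity analogue (where $P$ and $R$ must additionally commute with $(D_{x_k}^2+\pi^2 I)$ in the relevant direction), is the delicate point; the remainder is the bookkeeping of the induction.
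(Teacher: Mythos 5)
Your univariate core coincides exactly with the paper's argument: the closed form $(D_x^2+\pi^2 I)\phi_c(x)=\frac{\pi c^2(1+c^2)}{2(c^2+\sin^2\pi x)^{3/2}}>0$, kernel nonnegativity via the second-order trigonometric divided difference of a trigonometrically convex function, and the reindexing ("summation by parts") that moves $[-h_k,0,h_k]_{Tr_2}$ off the kernel and onto the data so that $(D_{x_k}^2+\pi^2 I)$ lands only on $\phi_{c_k}$. The genuine gap is the step you yourself flag as the crux and leave unproven: the assembly over the sparse grid. Moreover, the route you sketch for it would not close. The Boolean-sum identity $P\oplus R=P+R-PR$ represents the combination technique only for operators with the projector-like property $Q_{\mathbf{l}}Q_{\mathbf{m}}=Q_{\min(\mathbf{l},\mathbf{m})}$ on nested grids; that holds for sparse-grid \emph{interpolation}, but fails for these quasi-interpolants, which do not reproduce their own output. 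And the comparison $0\leq PRf\leq Rf$ cannot be extracted from \eqref{erortoone}: that inequality only says the kernel's integral slightly exceeds one, and gives no pointwise domination $Pg\leq g$; such domination is generally false for quasi-interpolants whose kernel mass exceeds one (they overshoot).

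The paper needs none of this machinery, because it proves the lemma for $Q_{n,d}f$ exactly as written in \eqref{sparsequasi}: a sum over indexed nodes $(\mathbf{l},\mathbf{j})$ in which every summand $f(\mathbf{t}_{\mathbf{l},\mathbf{j}})\Psi^*_{\mathbf{c},\mathbf{h}}(\mathbf{x}-\mathbf{t}_{\mathbf{l},\mathbf{j}})$ has a positive kernel factor, by the mean-value representation $\Psi^*_{\mathbf{c},\mathbf{h}}(\mathbf{x})=\prod_{k}(D_{x_k}^2+\pi^2 I)\phi_{c_k}(x_k-\xi_k)\sin 2\pi h_k$; the signed representation \eqref{sparsequasitele} is never invoked in this lemma. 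Under that reading, your own per-subgrid work already finishes the proof: \eqref{sparsequasi} is a combination of the subgrid operators \eqref{subgridquasi} with nonnegative weights, and both nonnegativity and trigonometric convexity in $x_k$ pass through such combinations by linearity of $(D_{x_k}^2+\pi^2 I)$. Your observation that the signed combination obstructs positivity is a fair criticism of the paper's claim that \eqref{sparsequasi} and \eqref{sparsequasitele} coincide, but as a proof of the stated lemma --- which concerns the operator defined by \eqref{sparsequasi} --- your proposal stops exactly where the paper's argument is immediate, and the bridge you propose is both unnecessary and unsound.
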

\begin{proof}
Observe that  
$$\Psi_{\mathbf{c},\mathbf{h}}^*(\bold x)=\prod_{k=1}^d(D_{x_k}^2+\pi^2I)\phi_{c_k}(x_k-\xi_k)\sin 2\pi h_k, \ \xi_k\in (-h_k, h_k).$$ The nonnegativity-preserving property follows directly from $$(D_{x_k}^2+\pi^2I)\phi_{c_k}(x_k-\xi_k)=\frac{\pi c_k^2(c_k^2+1)}{2(c_k^2+\sin^2\pi (x_k-\xi_k))^{3/2}}>0,\ \text{for all} \ k=1,2,\cdots, d.$$  To prove that $Q_{n,d}f$ preserves the trigonometric convexity \cite{Koch}, we apply the technique proposed in \cite{GaoandWu1} to $x_k$ and rewrite $Q_{n,d}f(\bold x)$ as
\begin{equation*}
Q_{n,d}f(\bold x)=\sum_{\mathbf{t_{\mathbf{l},\mathbf{j}}}\in W_{n,d}}\Bigg(\prod_{i\neq k}^d[-h_i,0,h_i]_{Tr_2}\phi_{c_i}(x_i-t_{l_i,j_i})\sin2\pi h_i\Bigg)[-h_k,0,h_k]_{Tr_2}f(\mathbf{t_{\mathbf{l},\mathbf{j}}})\phi_{c_k}(x_k-t_{l_k,j_k})\sin2\pi h_k,
\end{equation*}
where $h_j=2^{-l_j}$ for $j=1,2,\cdots, d$.
This in turn leads to \begin{equation*}
\begin{split}
&(D_{x_k}^2+\pi^2 I)Q_{n,d}f(\bold x)\\
&=\sum_{\mathbf{t_{\mathbf{l},\mathbf{j}}}\in W_{n,d}}\Bigg(\prod_{i\neq k}^d[-h_i,0,h_i]_{Tr_2}\phi_{c_i}(x_i-t_{l_i,j_i})\sin2\pi h_i\Bigg)[-h_k,0,h_k]_{Tr_2}f(\mathbf{t_{\mathbf{l},\mathbf{j}}})(D_{x_k}^2+\pi^2 I)\phi_{c_k}(x_k-t_{l_k,j_k})\sin2\pi h_k.
\end{split}
\end{equation*}
Additionally, the trigonometric convexity of $f$ with respect to $x_k$ gives $[-h_k,0,h_k]_{Tr_2}f\geq 0$. 
Consequently, by observing that $\prod_{i\neq k}^d[-h_i,0,h_i]_{Tr_2}\phi_{c_i}(x_i-t_{l_i,j_i})\sin2\pi h_i>0$ and $(D_{x_k}+\pi^2 I)\phi_{c_k}(x_k-t_{l_k,j_k})\sin2\pi h_k>0$, we have 
$(D_{x_k}^2+\pi^2 I)Q_{n,d}f(\bold x)\geq 0$.
\end{proof}

Up to now, we have constructed a sparse grid quasi-interpolant with anisotropic tensor-product multiquadric trigonometric kernel for high-dimensional periodic function approximation. We  go further with extending  our construction to quasi-interpolation for non-periodic function approximation  over the hypercube $[0,1]^d$.

\subsection{Quasi-interpolation defined on  $[0,1]^d$}
With the periodization strategy introduced in Subsection $2.2$, given any grid point $\mathbf{t_{\mathbf{l},\mathbf{j}}}=(t_{l_1,j_1},t_{l_2,j_2},\cdots,t_{l_d,j_d})$ over the sparse grid $\mathbb{W}_{n,d}$ of $\mathbb{T}^d$,  we  first transform it to the hypercube $[0,1]^d$  as $\boldsymbol{\gamma}(\mathbf{t_{\mathbf{l},\mathbf{j}}}):=(\gamma_1(t_{l_1,j_1}),\cdots,\gamma_d(t_{l_d,j_d}))\in [0,1]^d$ using the torus-to-cube transformation $\boldsymbol{\gamma}$. Suppose that we have   discrete function values of an unknown  \textbf{non-periodic} function $g$ as $$\{g(\boldsymbol{\gamma}(\mathbf{t_{\mathbf{l},\mathbf{j}}})):=g(\gamma_1(t_{l_1,j_1}),\cdots,\gamma_d(t_{l_d,j_d})), \mathbf{t_{\mathbf{l},\mathbf{j}}}\in \mathbb{W}_{n,d}\}.$$ Then, based on the torus-to-cube transformation $\boldsymbol{\gamma}$, we can obtain corresponding discrete function values of the transformed \textbf{periodic} function $f$
evaluated at the sparse grid $\mathbb{W}_{n,d}$ as
$$f(\mathbf{t_{\mathbf{l},\mathbf{j}}}):=g(\gamma_1(t_{l_1,j_1}),\cdots,\gamma_d(t_{l_d,j_d}))\prod_{i=1}^d[\omega_i(\gamma_i(t_{l_i,j_i}))\gamma_i'(t_{l_i,j_i})]^{1/2}, \mathbf{t_{\mathbf{l},\mathbf{j}}}\in \mathbb{W}_{n,d}.$$ Further,   applying the quasi-interpolation scheme \eqref{sparsequasi} to the discrete function values $\{f(\mathbf{t_{\mathbf{l},\mathbf{j}}})\}$, we get a sparse grid quasi-interpolant
\begin{equation*}
Q_{n,d}f(\bold x)=\sum_{\mathbf{t_{\mathbf{l},\mathbf{j}}}\in \mathbb{W}_{n,d}}f(\mathbf{t_{\mathbf{l},\mathbf{j}}})\Psi_{\mathbf{c},\mathbf{h}}^*(\bold x-\mathbf{t_{\mathbf{l},\mathbf{j}}})
\end{equation*}
 of the transformed \textbf{periodic} function $f$ defined over $\mathbb{T}^d$. To get a corresponding   quasi-interpolant $Qg(\bold y)$ of $g$ defined over $[0, 1]^d$ from $Q_{n,d}f(\bold x)$, we   first introduce a non-negative $L_1$-integrable  density function $\boldsymbol{\varrho}$ of $\boldsymbol{\gamma}$  via $$\boldsymbol{\varrho}(\bold y)=\prod_{j=1}^d\varrho_j(y_j), \  \text{where}\ \varrho_j(y_j):=(\gamma_j^{-1})'(y_j)=\frac{1}{\gamma_j'(\gamma_j^{-1}(y_j))}.$$ Here the inverse transformation $\boldsymbol{\gamma}^{-1}=(\gamma_1^{-1}(y_1),\cdots,\gamma_d^{-1}(y_d)):[0,1]^d\rightarrow \mathbb{T}^d$ is defined in the sense of $\bold x=\boldsymbol{\gamma}^{-1}(\bold y)\in \mathbb{T}^d$ if and only if $\bold y=\boldsymbol{\gamma}(\bold x)\in [0,1]^d$. Then, with the help of the density function $\boldsymbol{\varrho}$, we construct our final quasi-interpolation $Qg(\bold y),\ \bold y \in [0,1]^d$, in the form
\begin{equation}\label{nonperiodic}
Qg(\bold y)=\sum_{\mathbf{t_{\mathbf{l},\mathbf{j}}}\in W_{n,d}}g(\gamma_1(t_{l_1,j_1}),\cdots,\gamma_d(t_{l_d,j_d}))\prod_{i=1}^d\Bigg(\sqrt{\omega_i(\gamma_i(t_{l_i,j_i}))\gamma_i'(t_{l_i,j_i})}
\sqrt{\frac{\rho_i(y_i)}{\omega_i(y_i)}}\Psi_{c_i,h_i}(\gamma_i^{-1}(y_i)-t_{l_i,j_i})\sin2\pi h_i\Bigg).
\end{equation}
Moreover, corresponding to  the $L_{\infty}$-norm approximation error of $Q_{n,d}f$ (that is Theorem \ref{maintheorem}), we can  derive a weighted $L_{\infty}$-norm approximation error of $Qg$ in the following theorem.
\begin{theorem}(Weighted  $L_{\infty}$-norm approximation error)\label{errorestimateofnoninfty}
Let $Qg$ be defined as above for any $g\in L_2([0,1]^d,\omega)\cap C_{\text{mix}}^l([0,1]^d)$. Let $\boldsymbol{\gamma}$ be the torus-to-cube transformation satisfying conditions in Lemma \ref{lemmaH(T)}. Then we have
\begin{equation}\label{errorinfinity}
\begin{split}
\|Qg-g\|_{L_{\infty}([0,1]^d,\sqrt{\frac{\boldsymbol{\omega}}{\boldsymbol{\rho}}})}&=\|Q_{n,d}f-f\|_{\infty}.\\
\end{split}
\end{equation}
\end{theorem}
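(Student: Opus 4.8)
The plan is to show that the weighted error $Qg-g$ over the cube is, after multiplication by the weight $\sqrt{\boldsymbol{\omega}/\boldsymbol{\rho}}$, nothing but the torus error $Q_{n,d}f-f$ reparametrized through $\boldsymbol{\gamma}^{-1}$; the claimed identity then follows by a change of variable inside the (essential) supremum. The whole argument is a bookkeeping of the tensor-product weights $\boldsymbol{\omega}$, $\boldsymbol{\rho}$ and the transformation $\boldsymbol{\gamma}$, so there is no analytic difficulty once the algebra is set up correctly.

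First I would rewrite $Qg$ in terms of $Q_{n,d}f$. Substituting $\bold x=\boldsymbol{\gamma}^{-1}(\bold y)$ into the sparse grid quasi-interpolant \eqref{sparsequasi} and recalling that the transferred data satisfy $f(\mathbf{t_{\mathbf{l},\mathbf{j}}})=g(\boldsymbol{\gamma}(\mathbf{t_{\mathbf{l},\mathbf{j}}}))\prod_{i=1}^d\sqrt{\omega_i(\gamma_i(t_{l_i,j_i}))\gamma_i'(t_{l_i,j_i})}$, every summand of $Q_{n,d}f(\boldsymbol{\gamma}^{-1}(\bold y))$ coincides with the corresponding summand of the definition \eqref{nonperiodic} of $Qg(\bold y)$, except for the index-independent factor $\prod_{i=1}^d\sqrt{\rho_i(y_i)/\omega_i(y_i)}=\sqrt{\boldsymbol{\rho}(\bold y)/\boldsymbol{\omega}(\bold y)}$ that multiplies each term of \eqref{nonperiodic}. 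Pulling this common factor out of the sum gives the identity
\[
Qg(\bold y)=\sqrt{\frac{\boldsymbol{\rho}(\bold y)}{\boldsymbol{\omega}(\bold y)}}\,Q_{n,d}f\big(\boldsymbol{\gamma}^{-1}(\bold y)\big).
\]

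Next I would treat the exact target the same way. Evaluating the periodization formula \eqref{periodicfunction} (with $p=2$) at $\bold x=\boldsymbol{\gamma}^{-1}(\bold y)$ and using $\gamma_i(\gamma_i^{-1}(y_i))=y_i$ together with $\gamma_i'(\gamma_i^{-1}(y_i))=1/\rho_i(y_i)$ yields $f(\boldsymbol{\gamma}^{-1}(\bold y))=g(\bold y)\sqrt{\boldsymbol{\omega}(\bold y)/\boldsymbol{\rho}(\bold y)}$, equivalently $g(\bold y)=\sqrt{\boldsymbol{\rho}(\bold y)/\boldsymbol{\omega}(\bold y)}\,f(\boldsymbol{\gamma}^{-1}(\bold y))$. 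Subtracting this from the identity of the previous paragraph, the common prefactor $\sqrt{\boldsymbol{\rho}/\boldsymbol{\omega}}$ factors out of the difference, so after multiplying by the weight $\sqrt{\boldsymbol{\omega}/\boldsymbol{\rho}}$ it cancels exactly and leaves
\[
\sqrt{\frac{\boldsymbol{\omega}(\bold y)}{\boldsymbol{\rho}(\bold y)}}\big(Qg(\bold y)-g(\bold y)\big)=\big(Q_{n,d}f-f\big)\big(\boldsymbol{\gamma}^{-1}(\bold y)\big).
\]

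Finally I would pass to the norm. By the definition of $\|\cdot\|_{L_\infty(\Omega,\omega)}$ as the essential supremum of $|\cdot|\,\omega$, the last display shows that $\|Qg-g\|_{L_\infty([0,1]^d,\sqrt{\boldsymbol{\omega}/\boldsymbol{\rho}})}$ equals the supremum of $|(Q_{n,d}f-f)(\boldsymbol{\gamma}^{-1}(\bold y))|$ over $\bold y\in[0,1]^d$. Since $\boldsymbol{\gamma}$ is, by Definition \ref{transform}, a continuous increasing map with the stated boundary limits, it is a bijection of $\mathbb{T}^d$ onto $[0,1]^d$; hence $\bold x=\boldsymbol{\gamma}^{-1}(\bold y)$ ranges over all of $\mathbb{T}^d$ as $\bold y$ ranges over $[0,1]^d$, and the supremum is invariant under this reparametrization, giving exactly $\|Q_{n,d}f-f\|_\infty$. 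The only point demanding care is verifying that the weight, the square-root Jacobian factors carried by both $Qg$ and $g$, and the factors built into the data $f(\mathbf{t_{\mathbf{l},\mathbf{j}}})$ combine to produce precisely the cancellation above; the change of variable in the supremum is then immediate from the bijectivity guaranteed by Definition \ref{transform}.
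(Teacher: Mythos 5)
Your proof is correct and follows essentially the same route as the paper's: both reduce the weighted error on the cube to the torus error via the identities $Qg=\sqrt{\boldsymbol{\rho}/\boldsymbol{\omega}}\,Q_{n,d}f\circ\boldsymbol{\gamma}^{-1}$ and $g=\sqrt{\boldsymbol{\rho}/\boldsymbol{\omega}}\,f\circ\boldsymbol{\gamma}^{-1}$, then change variables in the (essential) supremum using the bijectivity of $\boldsymbol{\gamma}$. In fact you spell out the weight bookkeeping more explicitly than the paper's one-line computation, which silently uses the continuity of $f$ (noted via $f\in\mathbb{H}^l(\mathbb{T}^d)\subset C(\mathbb{T}^d)$) to pass from essential supremum to supremum — the one small point you should also state when equating the two.
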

\begin{proof}
The conditions of $g$ and $\boldsymbol{\gamma}$ lead to a transformed periodic function $f\in \mathbb{H}^l(\mathbb{T}^d)\subset C(\mathbb{T}^d)$. Thus we have
\begin{equation*}
\begin{split}
\|Qg-g\|_{L_{\infty}([0,1]^d,\sqrt{\frac{\boldsymbol{\omega}}{\boldsymbol{\rho}}})}&=\text{esssup}_{\bold y\in [0, 1]^d}\Bigg|\sqrt{\frac{\boldsymbol{\omega}}{\boldsymbol{\rho}}}\Bigg(g(\bold y)-Qg(\bold y)\Bigg)\Bigg|\\
&=\text{esssup}_{\bold x\in \mathbb{T}^d}\Bigg|g(\boldsymbol{\gamma}(\bold x))\prod_{j=1}^d\sqrt{\omega_j(\psi_j(x_j))\gamma_j'(x_j)}-Q_{n,d}f(\bold x)\Bigg|\\
&=\|f-Q_{n,d}f\|_{\infty}.
\end{split}
\end{equation*}
\end{proof}
This theorem implies that, in terms of the weighted $L_{\infty}$-norm approximation error,  the quasi-interpolant $Qg$ provides the same approximation order as its periodic counterpart $Q_{n,d}f$.
\section {\textit{Numerical Simulations}}
This section consists of two subsections. The first subsection demonstrates examples of approximating periodic functions and their  derivatives using  our sparse grid quasi-interpolation scheme \eqref{sparsequasi}. We also compare our scheme with the one based on Fourier techniques proposed by Nasdala and Potts \cite{NasdalaandPotts}. The second subsection provides an example of approximating a non-periodic function with our extended quasi-interpolation \eqref{nonperiodic} and the kernel-based quasi-interpolation scheme discussed by Jeong, Kersey and Yoon \cite{Jeong}.
\subsection{\textit{Periodic function approximation over $\mathbb{T}^d$}}
As an example, we consider approximating the purpose-built periodic polynomials having  a finite order of differentiability and periodicity in reference \cite{Morrow}. Following the technique provided by Morrow and  Stoyanov \cite{Morrow}, we construct a  multivariate periodic polynomial $f\in W^{4}_{\infty}(\mathbb{T}^{d})$  in a tensor-product form
\begin{equation}\label{periodipolynomial}
f_d(\bold x)=\prod_{j=1}^dg(x_j), \ \bold x\in \mathbb{T}^{d},
\end{equation}
where $$g(x_j)=\frac{(2x_{j}-1)^6}{5}-\frac{(2x_{j}-1)^4}{1}+\frac{7(2x_{j}-1)^2}{5}\in W^{4}_{\infty}(\mathbb{T}),\ j=1,2,\cdots, d.$$

We first  compare our scheme with the Fourier-based approximation scheme \cite{NasdalaandPotts} for approximating $f_5$ on sparse grid. Numerical results under different sparse grid levels $n$ are provided in Table $1$, where the $L_{\infty}$-norm   approximation error are evaluated over $3125$ fully gridded prediction points in $[0, 1]^5$.  Following Theorem $3.1$, we choose the shape parameters $c_{j}=A_{j}h_{j}$ with  $\{A_j\}$ being provided in Table $3$ in the Appendix  by trials and errors.
We can observe that the scheme \cite{NasdalaandPotts} provides higher convergence rate, but it takes much more running time, which makes it  difficult   to  approximate high-dimensional (greater than $5$) function even on sparse grid.  

\begin{table}[!http]
\caption{ Numerical results of approximating $f_5\in W_{\infty}^{4}([0,1]^{5})$ on sparse grid}
\begin{tabular}{ccccccc}
\hline
\multirow{2}{*}{$n$} & \multicolumn{2}{c}{Approx error} & \multicolumn{2}{c}{Approx order} & \multicolumn{2}{c}{Running time(/s)} \\ \cline{2-7} 
                   & Our scheme   & Fourier scheme  & Our scheme    & Fourier scheme    & Our scheme    & Fourier scheme   \\ \hline
5                  & 2.9597e-04      & 0.0032      & -            & -                 & 0.72         & 63.16             \\
6                  & 7.3523e-05      & 2.0143e-04      & 2.0242       & 3.9897            & 0.80         & 226.92             \\
7                  & 1.8243e-05      & 6.0373e-06      & 2.0395       & 5.0602           & 2.64         & 930.98             \\
8                  & 4.3529e-06  & 2.4039e-07      & 2.0043       & 4.6505             & 6.68         & 3527.78            \\ \hline
\end{tabular}
\centering
\end{table}

Moreover, to demonstrate that our sparse grid quasi-interpolation can lessen the curse of dimensionality, we go further with employing it to approximate $f_d$ and their derivatives for   $d\in \{7,\ 10\}$. We approximate the function, its first-order derivative  $\partial f_d/\partial x_1$, and second-order derivative $\partial^2 f_d/\partial x_1^2$. We choose  shape parameters $c_{j}=A_{j}h_{j}^{4/(4+\alpha _{j})},\ j=1,\ 2$ with $\alpha_{j} \in \{0,\ 1,\ 2\}$  following Theorem $3.1$. Corresponding constant coefficients $\{A_j\}$ are provided in Tables $4$-$5$ in the Appendix  by trials and errors. We note that   coefficients for approximating partial derivatives are usually larger than the ones for approximating the function.  A posteriori and a priori approximation orders  under levels (of sparse grid) $n\in \{3,\ 4,\ \cdots,\ 8\}$ for $d=7$,  and $n\in \{2,\ \cdots, 6\}$ for $d=10$, are provided in Figure $2$. Here the $L_{\infty}$-approximation errors are evaluated over $2187,\ 1024$ gridded prediction points corresponding to $d=7,10$.


\begin{figure}[!htp]
\centering
\subfigure[ $d=7$]{
\begin{minipage}[t]{0.5\linewidth}
\centering
\includegraphics[scale=0.5]{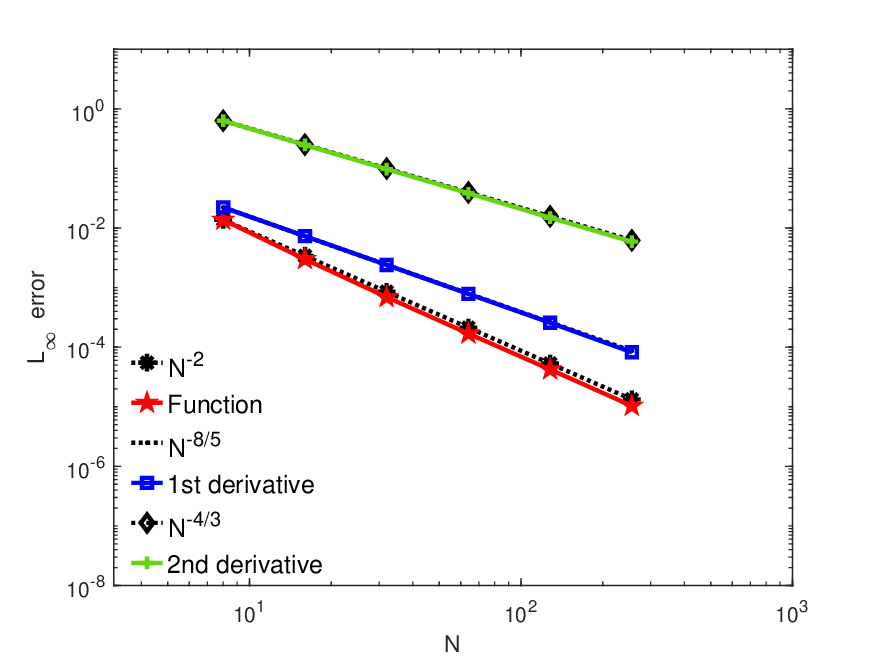}
\end{minipage}%
}%
\subfigure[ $d=10$]{
\begin{minipage}[t]{0.5\linewidth}
\centering
\includegraphics[scale=0.5]{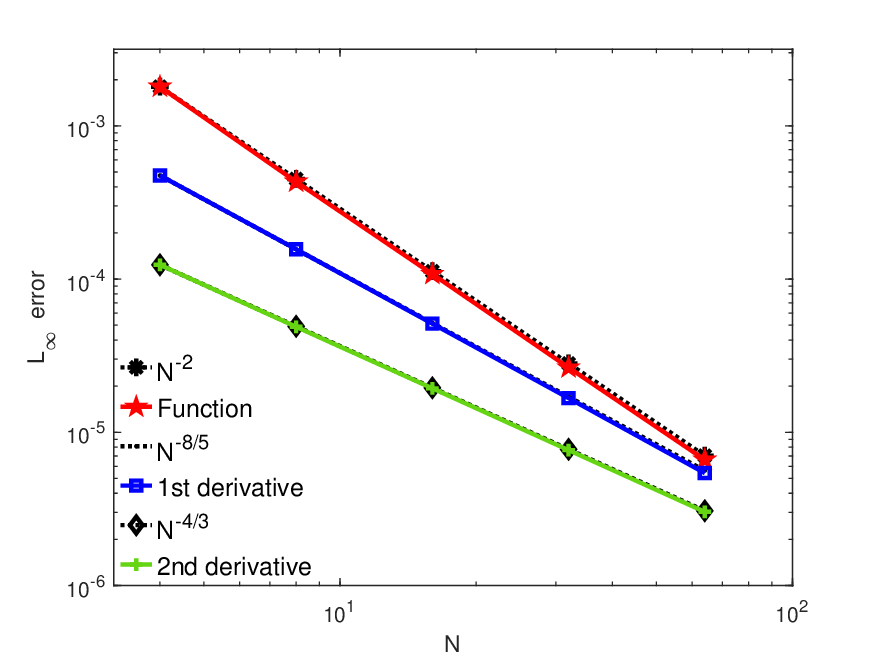}
\end{minipage}%
}%
\caption{ Numerical results of approximating derivatives of $f_d\in W^{4}_{\infty}([0,1]^{d})$ on sparse grid}
\end{figure}



 In  Figure $2$, the asterisk, point and diamond dotted lines denote a priori approximation order $2$ for $||\boldsymbol{\alpha}||_{\infty}=0$,  a priori approximation order $8/5$ for $||\boldsymbol{\alpha}||_{\infty}=1$, and a priori approximation order $4/3$ for $||\boldsymbol{\alpha}||_{\infty}=2$, respectively. Besides, the red pentagram, blue square and green plus-sign lines denote corresponding posteriori approximation orders of approximating the function, first-order derivative and second-order derivatives with our scheme, respectively. From Figure $2$, we can find that all posteriori approximation orders of our scheme are  higher than their priori counterparts. These demonstrate  vividly that our sparse grid quasi-interpolation still performs well for approximating high-dimensional function and its derivatives and thus can lessen the curse of dimensionality.

\subsection{\textit{Non-periodic function approximation over unit hypercube}}
We go further with providing an example of employing our quasi-interpolation \eqref{nonperiodic} to approximate a non-periodic function. As an example, we take the test function  (given in reference \cite{NasdalaandPotts}):
\begin{equation}\label{nonfunction}
g(\mathbf{y})=\prod_{j=1}^{d}(y_{j}^{2}-y_{j}+\frac{3}{4}),\  \mathbf{y}\in [-0.5,0.5]^{d}.
\end{equation}
Moreover, we choose the weight function $\boldsymbol{\omega}=1$ and the logarithmic transformation $\gamma_j(x_j,\eta_j)$ as
\begin{equation}\label{e1}
\begin{split}
\gamma_j(x_j,\eta_j)&=\frac{1}{2}\frac{(1+2x_j)^{\eta_j}-(1-2x_j)^{\eta_j}}{(1+2x_j)^{\eta_j}+(1-2x_j)^{\eta_j}},\\
\gamma_j'(x_j,\eta_j)&=\frac{4\eta_j(1-4x_j^2)^{\eta_j-1}}{((1+2x_j)^{\eta_j}+(1-2x_j)^{\eta_j})^2},
\end{split}
\end{equation}
 with $x_j\in [-0.5,0.5]$ and $\eta_j \in \mathbb{R}_{+}$. The non-periodic function together with the logarithmic transformation leads to a transformed periodic function
\[
\begin{split}\label{logtransfunction}
f(\mathbf{x})&=g(\gamma_{1} (x_{1},\eta_{1}),\gamma_{2} (x_{2},\eta_{2}),\cdots ,\gamma_{d} (x_{d},\eta_{d}))\prod_{j=1}^{d}\sqrt{{\gamma }'_{j}(x_{j},\eta_{j})}\\
&=\prod_{j=1}^{d}((\frac{1}{2}\frac{(1+2x_{j})^{\eta _{j}}-(1-2x_{j})^{\eta _{j}}}{(1+2x_{j})^{\eta _{j}}+(1-2x_{j})^{\eta _{j}}})^{2}-(\frac{1}{2}\frac{(1+2x_{j})^{\eta _{j}}-(1-2x_{j})^{\eta _{j}}}{(1+2x_{j})^{\eta _{j}}+(1-2x_{j})^{\eta _{j}}})+\frac{3}{4})\prod_{j=1}^{d}\sqrt{\frac{4\eta _{j}(1-4x_{j}^{2})^{\eta_j -1}}{((1+2x_{j})^{\eta _{j}}+(1-2x_{j})^{\eta _{j}})^{2}}}.
\end{split}
\]
In general, it is difficult to construct a suitable cube-to-torus map for a given weight function, for more details on  constructing a map corresponding to a given weight function, we refer readers to \cite{NasdalaandPotts}.
We discretize  the weighted $L_{\infty}$-norm on $[-0.5,0.5]^{d}$ as
$$||g-Qg||_{L_{\infty}([-0.5,0.5]^{d},\sqrt{1/\boldsymbol{\rho}})}=\max_{i\in\{1,\cdots,N\}}\Bigg\{\Bigg|\sqrt{1/\boldsymbol{\rho}(\mathbf{y}_{i})}
(g(\mathbf{y}_{i})-Qg(\mathbf{y}_{i}))\Bigg|\Bigg\}.$$
Numerical results are provided in Table $2$ under the choice of the shape parameters $c_j=A_jh_j$, and the logarithmic transformation $\gamma_j$ with $\eta_j=4$, for $j=1, 2$. Here discrete weighted $L_{\infty}$-norm approximation errors are computed over $101\times101$ uniformly gridded prediction points in $[-0.5, 0.5]^2$. From the table, we can find that our sparse grid quasi-interpolation provides almost the same convergence rate as its full grid counterpart, while saving a lot of sampling data and running time.


\begin{table}[!http]
\caption{ Numerical results of approximating two-dimensional non-periodic function under $\eta_j =4$}
 \begin{tabular}{ccccccccc}
\hline
\multirow{2}{*}{$n$} & \multicolumn{2}{c}{Choices of $\{A_j\}$} & \multicolumn{2}{c}{Approx error} & \multicolumn{2}{c}{Approx order} & \multicolumn{2}{c}{Running time(/s)} \\ \cline{2-9}
                    & sparse                  & full                   & sparse        & full          & sparse          & full           & sparse         & full\\ \hline
 6                  & (2.7,2.7)             & (0.38,0.38)            & 0.1990        & 0.0173        & -               & -              & 0.05           & 0.16   \\
 7                  & (0.98,0.98)             & (0.38,0.38)                  & 0.0497        & 0.0040        & 2.0015          & 2.1127         & 0.09           & 0.59  \\
 8                  & (0.54,0.54)             & (0.30,0.30)            & 0.0124        & 0.0010        & 2.0029          & 2.0000         & 0.24           & 2.40    \\
9                  & (0.40,0.40)             & (0.30,0.30)            & 0.0031        & 2.3676e-04    & 2.0000          & 2.0785         & 0.54           & 10.63 \\
 10                 & (0.11,0.11)             & (0.02,0.02)            & 7.5896e-04        & 5.6487e-05    & 2.0302          & 2.0674         & 1.20           & 78.74   \\
 11                 & (0.01,0.01)             & (0.01,0.01)            & 1.8896e-04        & 1.3580e-05    & 2.0059 & 2.0564         & 2.61           & 439.65  \\ \hline
\end{tabular}
 \centering
 \end{table}

Further, to demonstrate that our scheme can mitigate the curse of dimensionality, we consider higher dimension cases for $d=7,10$. We also compare our scheme with the kernel-based quasi-interpolation scheme on sparse grids that has been recently proposed by Jeong, Kersey and Yoon \cite{Jeong} via the boundary extension technique.  Moreover, to enable a close comparison with our multiquadric trigonometric function, we choose the multiquadric function  $\phi_{1,c_j}(x_j)=\sqrt{c_j^2+x_j^2}$ as a special case of formulas $(6)$ in \cite{Jeong}.
 Then the kernels in formulas $(11)$ in \cite{Jeong} corresponds to the ones of $L_d$ quasi-interpolation proposed by Schaback and Wu \cite{WuandSchaback}.  Besides, it includes the second-order B-spline quasi-interpolation as a special case with all  shape parameters being zero.
Finally, using the tensor-product of these kernels as the final multivariate kernel (that is the formulas $(3)$ in  \cite{Jeong}), we arrive at the scheme $(4)$ in \cite{Jeong} with the simplest (two points) boundary extension technique. 
Sketches of corresponding approximation errors as well as running time are provided in Figure $3$.  Here the discrete weighted $L_{\infty}$-norm approximation errors are evaluated over $2187,1024$ gridded prediction points for $d=7,10$, espectively. For both schemes, we choose the same shape parameters $\bold c=\bold A\bold h$ with $\mathbf{A}$ being provided in Table $6$ in the Appendix by trials and errors. In the figure, the  black dashed line denotes a priori approximation order, the green solid lines denote the numercial results of the scheme proposed by Jeong, Kersey and Yoon \cite{Jeong} (abbreviated as JKY), the black dash-dotted lines denote the one of the second-order (tensor-product) B-spline quasi-interpolation, and the red square lines denote the numerical results of our scheme with  $\eta_j=2$ in the logarithmic transformation $\gamma_j(x_j,\eta_j)$ defined in Equation \eqref{e1}. We can find that our scheme yields the same convergence order as JKY and the tensor-product B-spline quasi-interpolation. However, it saves a lot of running time. More importantly, our scheme still performs well for $d=10$, while  JKY and the B-spline quasi-interpolation even with the simplest (two points) boundary extension can not run in the same computer. 
\begin{figure}

\subfigure[ $d=7$, $\eta_j=2$]{
\begin{minipage}[t]{0.99\linewidth}
\centering
\includegraphics[scale=0.5]{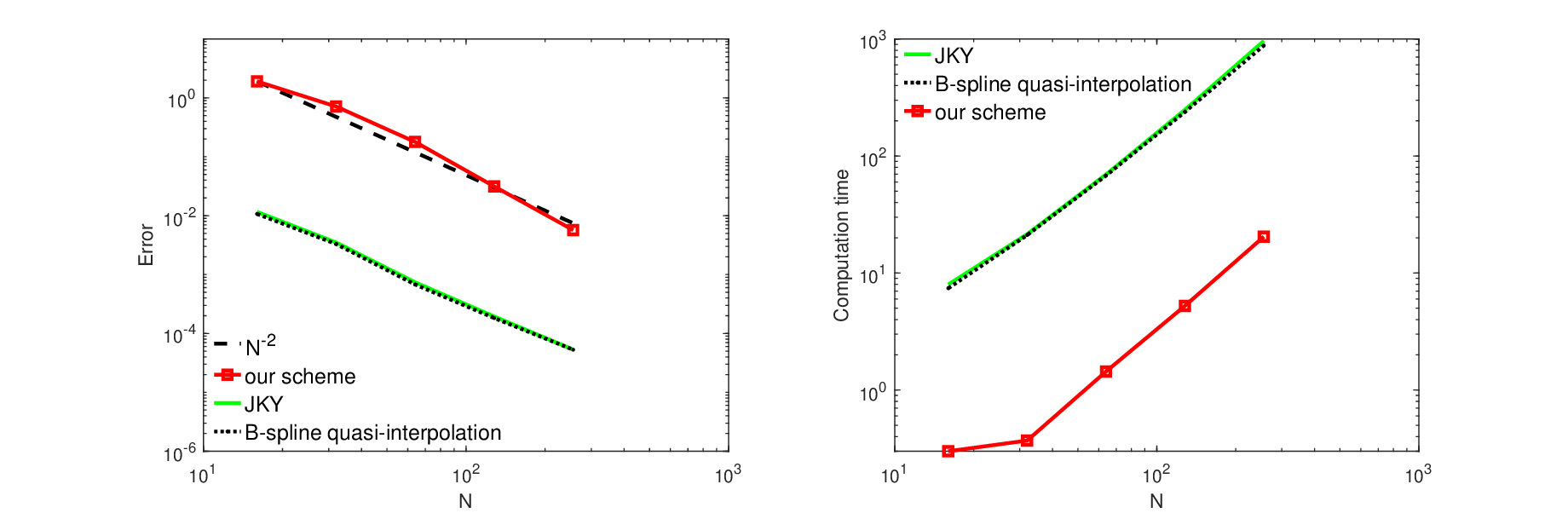}
\end{minipage}%
}%

\subfigure[ $d=10$, $\eta_j=2$]{
\begin{minipage}[t]{0.99\linewidth}
\centering
\includegraphics[scale=0.5]{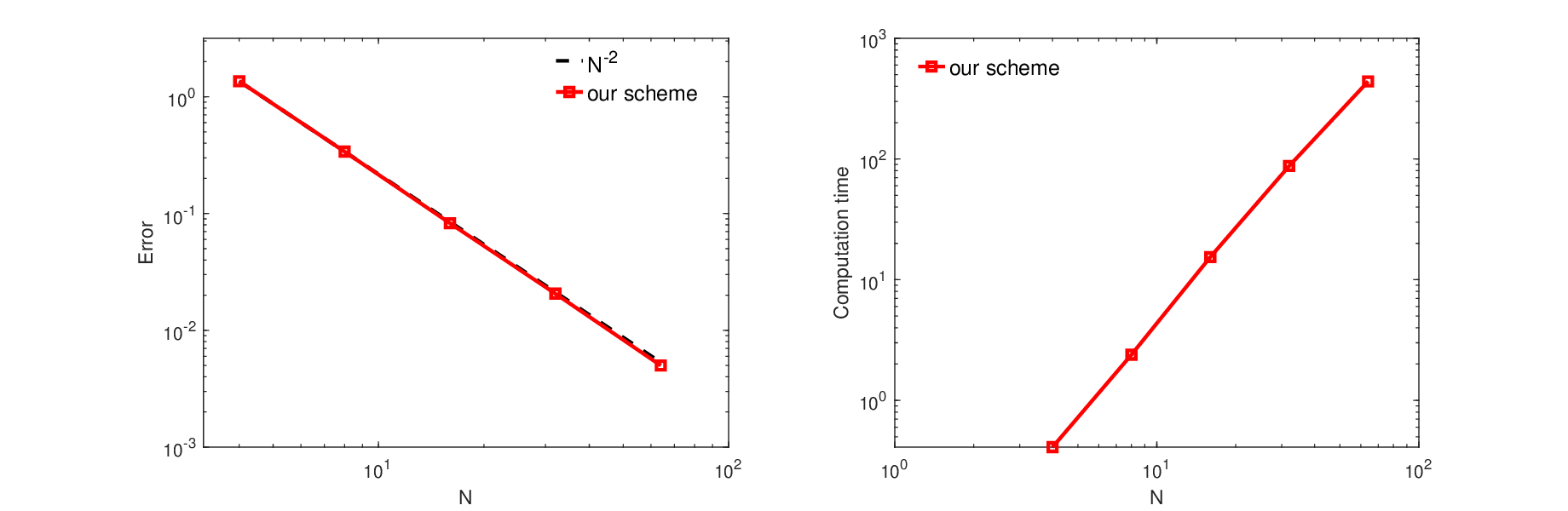}
\end{minipage}%
}%
\caption{ Numerical results of approximating high-dimensional non-periodic  function}
\end{figure}

\section {\textit{Conclusions and discussions}}
We  discuss high-dimensional function approximation under the framework of quasi-interpolation.  Our final approximant takes a weighted average of the available data and directly assembles them with translations of   tensor-product univariate kernels. It enjoys a simple construction, optimal convergence rate, as well as regularization property, yet it provides an efficient tool for mitigating the curse of dimensionality in various high-dimensional environments. Further works will focus on  developing   $L_p$-estimates  of our quasi-interpolation for more general function spaces motivated from the salient work given by Kolomoitsev et al., \cite{Kolomoitsev3} and applying it in numerical solutions of differential equations defined over  high-dimensional tori.\\
\textbf{Acknowledgements} \\
The authors are very grateful to the editor as well as two anonymous referees for  valuable suggestions and insightful comments, which contributed to improve the paper considerably.

\section*{Appendix}

\begin{table}[H]
\caption{ Choices of $\{A_j\}$ for $f\in W^{4}_{\infty}([0,1]^{5})$}
\begin{tabular}{clll}
\hline
$n$  & \multicolumn{1}{c}{Function} & \multicolumn{1}{c}{1st-order derivatives} &  \multicolumn{1}{c}{2nd-order derivatives} \\ \hline
5  & (0.29,0.29,$\cdots$,0.29)   & (0.04,0.01,$\cdots$,0.01)           & (0.19,0.01,$\cdots$,0.01)           \\
6  & (0.22,0.22,$\cdots$,0.22)   & (0.06,0.01,$\cdots$,0.01)           & (0.26,0.01,$\cdots$,0.01)           \\
7  & (0.18,0.18,$\cdots$,0.18)   & (0.08,0.01,$\cdots$,0.01)           & (0.31,0.01,$\cdots$,0.01)           \\
8  & (0.15,0.15,$\cdots$,0.15)   & (0.10,0.01,$\cdots$,0.01)           & (0.33,0.01,$\cdots$,0.01)           \\
9  & (0.13,0.13,$\cdots$,0.13)   & (0.12,0.01,$\cdots$,0.01)           & (0.33,0.01,$\cdots$,0.01)           \\
10 & (0.11,0.11,$\cdots$,0.11)   & (0.14,0.01,$\cdots$,0.01)           & (0.32,0.01,$\cdots$,0.01)           \\
11 & (0.10,0.10,$\cdots$,0.10)   & (0.16,0.01,$\cdots$,0.01)           & (0.30,0.01,$\cdots$,0.01)            \\ \hline
\end{tabular}
\centering
\end{table}

\begin{table}[H]
\caption{  Choices of $\{A_j\}$ for $f\in W^{4}_{\infty}([0,1]^{7})$}
\begin{tabular}{cllll}
\hline
$n$ & \multicolumn{1}{c}{Function}         & \multicolumn{1}{c}{1st-order derivatives} & \multicolumn{1}{c}{2nd-order derivatives}  \\ \hline
3 & (1.10,1.10,$\cdots$,1.10) & (2.11,0.01,$\cdots$,0.01) & (0.240,0.01,$\cdots$,0.01) \\
4 & (1.10,1.10,$\cdots$,1.10) & (1.55,0.01,$\cdots$,0.01) & (0.355,0.01,$\cdots$,0.01) \\
5 & (1.10,1.10,$\cdots$,1.10) & (1.31,0.01,$\cdots$,0.01) & (0.470,0.01,$\cdots$,0.01) \\
6 & (0.22,0.22,$\cdots$,0.22) & (1.00,0.01,$\cdots$,0.01) & (0.630,0.01,$\cdots$,0.01) \\
7 & (0.37,0.37,$\cdots$,0.37) & (0.46,0.01,$\cdots$,0.01) & (0.690,0.01,$\cdots$,0.01) \\
8 & (0.10,0.10,$\cdots$,0.10) & (0.40,0.01,$\cdots$,0.01) & (0.700,0.01,$\cdots$,0.01) \\ \hline
\end{tabular}
\centering
\end{table}

\begin{table}[H]
\caption{  Choices of $\{A_j\}$ for $f\in W^{4}_{\infty}([0,1]^{10})$}
\begin{tabular}{clll}
\hline
$n$ & \multicolumn{1}{c}{Function}                                  & \multicolumn{1}{c}{1st-order derivatives}                 & \multicolumn{1}{c}{2nd-order derivatives}                \\ \hline
2 & (0.880,0.880,$\cdots$,0.880) & (0.090,0.01,$\cdots$,0.01)  & (1.04,0.01,$\cdots$,0.01) \\
3 & (0.650,0.650,$\cdots$,0.650) & (1.190,0.01,$\cdots$,0.01)  & (1.10,0.01,$\cdots$,0.01)  \\
4 & (0.595,0.595,$\cdots$,0.595) & (2.020,0.01,$\cdots$,0.01)  & (2.10,0.01,$\cdots$,0.01) \\
5 & (0.983,0.983,$\cdots$,0.983) & (2.118,0.01,$\cdots$,0.01) & (2.08,0.01,$\cdots$,0.01) \\
6 & (0.100,0.100,$\cdots$,0.100) & (0.185,0.01,$\cdots$,0.01) & (1.80,0.01,$\cdots$,0.01) \\ \hline
\end{tabular}
\centering
\end{table}
\begin{table}[H]
\caption{ Choices of $\{A_j\}$ for non-periodic function approximations}
\begin{tabular}{cclclc}
\hline
&$n$ & \quad \quad $d=7$ & $n$ & \quad \quad $d=10$            \\ \hline
& 4 & (0.05,$\cdots$,0.05)   & 2 & (1.08,$\cdots$,1.08)    \\
& 5 & (0.05,$\cdots$,0.05) & 3 & (0.72,$\cdots$,0.72)   \\
& 6 & (0.05,$\cdots$,0.05) & 4 & (0.28,$\cdots$,0.28) \\
& 7 & (0.05,$\cdots$,0.05)  & 5 & (0.61,$\cdots$,0.61) \\
 & 8 & (0.05,$\cdots$,0.05)    & 6 & (0.40,$\cdots$,0.40)   \\ \hline
\end{tabular}
\centering
\end{table}
 \end{document}